\colorlet{darkblue}{blue!50!black}
\colorlet{darkblue}{blue!50!black}
\renewcommand{\Re}{\mathop{\rm Re}\nolimits}
\newcommand{\p}{\partial}
\newcommand{\Q}{{\mathbb Q}}
\newcommand{\K}{{\mathbb K}}
\newcommand{\D}{{\mathbb D}}
\newcommand{\C}{{\mathbb C}}
\newcommand{\mek}{{\mathbf 1}}
\newcommand{\R}{{\mathbb R}}
\newcommand{\N}{{\mathbb N}}
\newcommand{\la}{\lambda}
\newcommand{\La}{\Lambda}
\newcommand{\ty}{\infty}
\newcommand{\de}{\delta}
\newcommand{\aA}{{\cal A}}
\newcommand{\FF}{{\cal F}}
\newcommand{\GG}{{\cal G}}
\newcommand{\HH}{{\cal H}}
\newcommand{\KK}{{\cal K}}
\newcommand{\RR}{{\cal R}}
\newcommand{\sS}{{\cal S}}
\newcommand{\TT}{{\cal T}}
\newcommand{\lag}{\langle}
\newcommand{\rag}{\rangle}
\newcommand{\dd}{{\textup d}}
\newcommand{\lspan}{\mathop{\rm span}\nolimits}
\theoremstyle{plain}
\newtheorem*{mt}{Main Theorem}
\newtheorem*{lemma*}{Lemma}
\newtheorem{theorem}{Theorem}[section]
\newtheorem{lemma}[theorem]{Lemma}
\newtheorem{proposition}[theorem]{Proposition}
\newtheorem{corollary}[theorem]{Corollary}
\theoremstyle{definition}
\newtheorem{definition}[theorem]{Definition}
\theoremstyle{remark}
\newtheorem{remark}[theorem]{Remark}
\numberwithin{equation}{section}
\begin{document}

\author{Alessandro Duca\,\footnote{Universit\'e Paris-Saclay, UVSQ, CNRS, Laboratoire de Math\'ematiques de Versailles, 78000, Versailles, France;  e-mail: \href{mailto:alessandro.duca@uvsq.fr}{Alessandro.Duca@uvsq.fr}} \and  Vahagn~Nersesyan\,\footnote{NYU-ECNU Institute of Mathematical Sciences, NYU Shanghai, 3663 Zhongshan Road North, Shanghai, 200062, China, e-mail: \href{mailto:vahagn.nersesyan@nyu.edu}{Vahagn.Nersesyan@nyu.edu}}\, \footnote{Universit\'e Paris-Saclay, UVSQ, CNRS, Laboratoire de Math\'ematiques de Versailles, 78000, Versailles, France}
}

 \date{\today}

\title{Local exact controllability    of the 1D nonlinear  Schr\"odinger~equation in the case of Dirichlet boundary conditions}
\date{\today}
\maketitle

\begin{abstract}
We consider the 1D nonlinear Schr\"odinger   equation with bilinear control.
In the case of  Neumann boundary conditions, 
  local exact controllability of this equation near the ground state
    has been proved by  Beauchard and Laurent~\cite{BL-2010}. 
In this paper, we study the case of Dirichlet boundary conditions.~To establish  the controllability of the linearised     equation, we~use a bilinear control   acting    through four directions:~three Fourier modes and one generic direction.~The~Fourier modes are appropriately chosen so that they satisfy a  saturation property.~These modes allow to control approximately  the linearised Schr\"odinger equation.~We show that the reachable  set for the linearised    equation   is closed.~This is achieved by representing the resolving operator        as a sum of two linear continuous  mappings: one   is surjective (here  the control in   generic  direction~is~used) and the other    is   compact.~A mapping with   dense and closed image is   surjective, so the linearised Schr\"odinger   equation is exactly controllable.    Then   local exact controllability of the nonlinear equation   is derived using the inverse mapping theorem.

\medskip
\noindent
{\bf AMS subject classifications:}      35Q55, 81Q93, 93B05

\medskip
\noindent
{\bf Keywords:} Nonlinear Schr\"odinger equation, local exact   controllability, linearisation,   approximate controllability, saturation property

\end{abstract}

\newpage

   \tableofcontents

\setcounter{section}{-1}

\section{Introduction}
\label{S:0}

We study the  controllability of the   one-dimensional   nonlinear 
Schr\"odinger (NLS) equation with bilinear control and   Dirichlet boundary conditions.~To simplify the presentation, 
  we    consider in this introduction the case of the cubic NLS equation   
      \begin{gather} 
 	i \p_t \psi  =-\p_{xx}^2 \psi +\kappa  |\psi|^2 \psi +\lag u(t),Q(x) \rag \psi, \quad x\in I=(0,1), \label{0.1}\\
 	 	\psi(t,0) =\psi(t,1)=0,\label{0.2}
 	 	   \end{gather} where   
  $Q:    I\to \R^q$ is a  given 
    external        field
     and $\kappa $ is a       real number.~We fix any~$T>0$  and  consider the amplitude $u:[0,T]\to \R^q$ as a  control term and   the solution at time~$T$, i.e.,~$\psi(T)$,  as a  state.

   To formulate the main result of this paper, let us introduce some notation. We   consider $L^2(I;\C)$
    as a real Hilbert space endowed  with the scalar product  
    $$
    \lag f,g \rag_{L^2}= \Re\int_0^1 f(x)\overline{g(x)} \dd x
    $$and the corresponding norm $ \|\cdot\|_{L^2}$.
 Let   $A$ be the Dirichlet Laplacian operator
     $$
     A=-\p_{xx}^2,\quad \quad \D(A)=H^2\cap H^1_0(I;\C),
     $$and  let
       $\phi_k(x)=\sqrt{2}\sin(k\pi x)$, $k\ge1$ be its eigenfunctions   associated with   the eigenvalues $\la_k=k^2\pi^2$.   
    We    use the spaces $H^s_{(0)}=\D(A^\frac{s}{2})$, $s\ge0$ endowed with the scalar products $ \lag f, g\rag_{(s)} = \lag A^\frac{s}{2} f, A^\frac{s}{2} g\rag_{L^2}$ and the corresponding norms~$\|\cdot\|_{(s)}$. 
    The system \eqref{0.1}, \eqref{0.2} is supplemented with the initial condition
  \begin{equation}\label{0.3}
   	 	 		\psi(0,x)=\psi_0(x),
   \end{equation} 
   which is assumed to belong to the  unit sphere $\sS$ in $L^2(I;\C)$. 
The following is the main result of this paper.
\begin{mt}  
Assume that $Q=(Q_1,\ldots, Q_q)$  is a smooth field such that the vector space 
\begin{equation}\label{0.4}
\Q=\lspan_\R\{Q_k:k=1,\ldots, q\}	
\end{equation}
 contains the functions $\mek$, $\cos(\pi x)$,   $\cos(2\pi x)$, and a function $\mu$
   verifying the inequality
\begin{equation}\label{0.5}
|\lag \mu \phi_1, \phi_k\rag_{L^2}|	\ge \frac{c}{k^3}, \quad k\ge1
\end{equation}for some number $c>0$.~Then 
 there is an at most countable set $\KK\subset (-\ty, 0)$ such that, for any $\kappa\in \R\setminus \KK$, the NLS equation   is locally exactly controllable  near the ground state $\phi_1$.~More precisely,   for any $T>0$, there is a number $\de>0$ such that, for  any $\psi_0, \psi_1 \in H^3_{(0)}(I;\C)\cap \sS$ with  
 $$
 \|\psi_j-\phi_1\|_{(3)}<\de, \quad j=0,1, 
 $$  
 there is a control $u\in L^2([0,T];\R^q)$ and a    solution $\psi\in C([0,T];H^3_{(0)}(I;\C))$   of the problem~\eqref{0.1}-\eqref{0.3}   satisfying  $\psi(T)= \psi_1$.
 \end{mt}
 A more general version of this theorem is stated  in Section \ref{S:2} (see Theorem~\ref{T:2.2}). In that version,  the   nonlinear term has  the form $|\psi|^{2p} \psi$ with any integer~$p\ge1$, and the conditions on the field $Q$ and the number $\kappa$ are formulated in terms of a general saturation property.

 The controllability of the Schr\"odinger     equation \eqref{0.1} has been extensively studied in the literature in the case $\kappa=0$.~Note that in that case, even if the equation is linear in $\psi$, the associated control problem is still nonlinear. Ball,~Marsden, and Slemrod~\cite{BMS-82} proved that the reachable set for this equation 
 from any initial~condition in~$H^2_{(0)}(I;\C)\cap \sS$   with controls in  $L^2$ has   empty interior in~$H^2_{(0)}(I;\C)\cap \sS$.~In particular, this means that the problem is not locally exactly controllable in that phase space.~Beauchard~\cite{B-2005} obtained the first positive controllability result:~in the case~$Q(x)=x$,  she    proved      local exact   controllability  in some~$H^7_{(0)}$-neighborhood of any eigenstate by using a Nash--Moser theorem.~Beauchard and Coron~\cite{BC-2006}   obtained exact controllability between   neighborhoods of different eigenstates.~Later, in the paper~\cite{BL-2010}, Beauchard and Laurent  found a way to use the classical inverse mapping theorem to prove   local exact controllability of the    Schr\"odinger equation; more precisely,    they proved exact controllability in some~$H^3_{(0)}$-neighborhood of any eigenstate in the case when~$Q=\mu$ satisfies condition~\eqref{0.5}.~The~methods of~\cite{BL-2010} have been further developed by  Morancey and the authors of this paper  in~\cite{Morgan-14, MN-15, AD-20} to study   simultaneous exact controllability of several  Schr\"odinger~equations.

 All the above papers deal with the one-dimensional Schr\"odinger     equation. In~the multidimensional case,     exact controllability  remains an   open problem. In~that situation,      approximate controllability property  has been studied by many authors;
for the first results we refer the reader  to the works by Boscain et al.~\cite{CMSB-2009, BCCS-12}, Mirrahimi~\cite{MM-09}, and the second author~\cite{VN-2010}.


In the case of the NLS equation \eqref{0.1} with $\kappa\neq 0$ and Neumann boundary conditions,   local exact controllability is established by Beauchard and Laurent~\cite{BL-2010}.~They use  the inverse mapping theorem and     exact controllability of the linearised equation.~The latter is  proved by using a convenient change of the unknown that reduces  the original problem  to another  linear system with explicit    spectrum. The controllability of the reduced system is proved by using a moment problem approach and the Ingham inequality.  In the case of Dirichlet boundary conditions, it is not clear whether such reduction is possible, so  we proceed in a different~way.

We  note that when  $\mek$ and $\cos(2\pi x)$ belong to the vector space~$\Q$ defined by~\eqref{0.4},     the ground state $\phi_1$ is a
stationary solution of the  NLS equation~\eqref{0.1}  corresponding to some constant control $u$.~The linearisation of the equation   around the couple $(\phi_1,u)$  is given by 
\begin{equation}\label{0.6}
 	i \p_t \xi=-\p_{xx}^2 \xi-\pi^2\xi +2 \kappa  \phi_1^2 \Re(\xi)   +   \lag v(t),Q(x) \rag \phi_1.
\end{equation}
We prove   exact controllability of this equation  in two steps.~First, 
 we  show that if the number $\kappa$ is in the complement of some at most countable set $\KK$, then the directions~$\mek$ and $\cos(\pi x)$ are saturating.~As a consequence, we obtain   approximate controllability of Eq.~\eqref{0.6}.
  The  saturation argument employed here is inspired by the 
 papers of Agrachev, Sarychev~\cite{AS-2005,AS-2006} and Shirikyan~\cite{shirikyan-cmp2006}, which study      approximate controllability  of the nonlinear Navier--Stokes and Euler systems.

 Next we show that   approximate controllability of Eq.~\eqref{0.6}  implies its exact controllability. To this end,  we decompose the solution   as follows $\xi=\xi_1+\xi_2$, where $\xi_1$ and~$\xi_2$ are    solutions of   equations
 \begin{align}  
 	i \p_t \xi_1&=-\p_{xx}^2 \xi_1-\pi^2\xi_1  +   \lag v(t),Q(x) \rag \phi_1, \label{0.7} \\
 	i \p_t \xi_2&=-\p_{xx}^2 \xi_2-\pi^2\xi_2 +2 \kappa  \phi_1^2 \Re(\xi_1+\xi_2). \label{0.8}  \end{align} 
 	Eq.~\eqref{0.7} is exactly controllable.~Indeed, as in \cite{BL-2010}, this can be seen  by   rewriting  the control system    as a moment
  problem and then by solving  it with the help of     the Ingham inequality and the assumption \eqref{0.5}.~On the other hand, we show that the resolving operator   of Eq.~\eqref{0.8} is compact.~According to a functional analysis result, in a Banach space, the sum of compact and    surjective linear continuous mappings  has   closed image. On the other hand, this image is dense, by    approximate controllability of Eq.~\eqref{0.6}.~An operator with    closed and dense image is obviously surjective, so the linearised Schr\"odinger equation~\eqref{0.6}    is exactly controllable.  Applying the inverse mapping theorem, we derive    local exact controllability of the nonlinear equation.

In the case $\kappa\in \KK$, the linearised   control system  may possibly miss    one direction. However, we expect that using the nonlinear term one can prove that the
result of   the Main Theorem   still remains true. It~would be natural to study this case by applying  a   power series expansion   in the spirit of the paper~\cite{CC-04} by Coron and Cr\'epeau  (see also~\cite{coron2007, BC-2006}). This~question  will be considered~elsewhere.

    The use of   saturation property to prove  approximate controllability of the linearised Schr\"odinger equation and the argument    allowing  to derive   exact controllability from    approximate controllability are among the novelties of this paper.    We believe these  arguments can be employed   in other useful situations.

Global   controllability of the NLS equation \eqref{0.1} with $\kappa\neq 0$ is  a  challenging open problem. First results in this direction have been obtained   recently  by the authors~\cite{DN-2021} and by Coron et al.~\cite{CXZ-21},  who consider    approximate controllability   between some particular states  (in a semiclassical sense in the second reference).~From the Main~Theorem and the time reversibility   of the Schr\"odinger equation it follows that   global exact controllability will be established if one shows   approximate controllability to  the ground state~$\phi_1$ in the $H^3_{(0)}$-norm (see~Theorem~3.2 in~\cite{VN-2010} for the case $\kappa=0$).

 \subsubsection*{Acknowledgement}  
    This paper was finalised when the second author was visiting the  School of Mathematical Sciences of Shanghai Jiao Tong University. He thanks the institute for  hospitality.

 \subsubsection*{Notation}

 In this paper, we use the following notation.
 
 \smallskip
\noindent
 $\lag \cdot,  \cdot\rag$ and $|\cdot|$ denote the Euclidian   scalar product and   norm in    $\R^q$.  
 
  \smallskip
\noindent 
$\mek$ is the function identically equal to $1$ on the interval $I=(0,1)$.

 \smallskip
\noindent
  $L^2=L^2(I;\C)$  and $H^s=H^s(I;\C)$, $s> 0$ are the usual  Lebesgue and Sobolev   spaces of functions $g:I\to \C$   with the    norms $\|\cdot\|_{L^2}$ and $\|\cdot\|_{H^s}$.  In the case of the spaces of real-valued functions, we   write $L^2(I;\R)$  and $H^s(I;\R)$.
  
 \smallskip
\noindent
 For any $V\in H^3(I;\R)$, we denote by $A_V$    the Schr\"odinger operator 
\begin{equation}\label{0.9}
     A_V=-\p_{xx}^2+V,\quad \quad \D(A_V)=H^2\cap H^1_0(I;\C).
\end{equation}
      $\{\phi_{k,V}\}$ is an orthonormal basis in $L^2(I;\R)$ formed by  eigenfunctions of      $A_V$, and~$ \la_{1,V}< \ldots< \la_{k,V}<\ldots $ are~the corresponding eigenvalues.   
  
   \smallskip
\noindent  $H^s_{(V)}=\D(A_V^\frac{s}{2})$, $s\ge0$. Note that
 $$ 
H^3_{(V)}=H^3_{(0)} = \left\{ \psi\in H^3(I,\C):  \psi|_{x=0,1}=\psi''|_{x=0,1}=0 \right\}.
$$
  
 \noindent 
$\ell^2=\left\{\{a_k\}_{k\ge1}\in  \C^\N: \sum_{k=1}^{+\ty}|a_k|^2<+\ty\right\}$,  $\ell^2_r=\left\{\{a_k\}_{k\ge1}\in \ell^2 :  a_1\in\R\right\}$.

 \smallskip
\noindent 
We   write $J_T$ instead of $[0,T]$.

 \smallskip
\noindent
 Let $X$ be a Banach space endowed with a norm $\|\cdot\|$.

 \smallskip
\noindent $B_X(a, r)$ denotes the closed ball in $X$ of radius $r > 0$ centred at $a\in X$.

  \smallskip
\noindent  $L^2(J_T;X)$   is      the
space of Borel-measurable functions $g:J_T\to X$ with the norm 
$$
\|g\|_{L^2(J_T;X)}=\left(\int_0^T\|g(t)\|_X^2\dd t
\right)^{\frac12}.
$$

 \smallskip
\noindent 
$C(J_T;X)$  is the space of continuous functions $g:J_T\to X$   with the norm 
$\|g\|_{C(J_T;X)}=\max_{t\in J_T}\|g(t)\|_X.$

\section{Controllability of the linearised equation}\label{S:1}

In this  section, we  study 
   the  controllability   of  the following linear Schr\"odinger   equation (cf. Eq.~\eqref{0.6}):  
\begin{gather}  
 	i \p_t \xi=-\p_{xx}^2 \xi+V(x)\xi-\la\xi + W(x) \Re(\xi)   +   \lag v(t),Q(x) \rag \phi(x), \label{1.1}\\
 	 	\xi(t,0)=\xi(t,1)=0, \label{1.2}
 	 	\end{gather} 
where   $Q:    I\to \R^q$ is a given field,   $V,W:    I\to \R$ are given potentials,~$\phi=\phi_{1,V}$   is the ground state of the Schr\"odinger operator $A_V$ (see \eqref{0.9}),  and $\la=\la_{1,V}$ is the associated eigenvalue.~The following  well-posedness result is a   consequence of Proposition~\ref{P:3.1}.
\begin{proposition}\label{P:1.1}
	For any   $T>0,  Q  \in H^3 (I,\R^q),  V,W\in H^3(I;\R),      v \in L^2(J_T;\R^q)$,   and $\xi_0\in H^3_{(0)},$    there is a unique   solution  $\xi \in C(J_T;H^3_{(0)} )$  of the   problem  \eqref{1.1},~\eqref{1.2}     satisfying    $\xi(0)=\xi_0$.~Moreover,  there is a constant $C_T>0$ such~that 
$$
\|\xi\|_{C(J_T;H^3_{(0)} )}\le C_T \left (\|\xi_0\|_{(3)}+\|v\|_{L^2(J_T;\R^q)}\right).
$$
\end{proposition}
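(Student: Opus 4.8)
The plan is to treat \eqref{1.1} as an inhomogeneous linear Schr\"odinger equation driven by a unitary group, and to reduce everything to the regularity statement of Proposition~\ref{P:3.1} via Duhamel's formula together with a contraction argument. First I would set $L=A_V-\la=-\p_{xx}^2+V-\la$, which is self-adjoint on $L^2$ with domain $H^2\cap H^1_0(I;\C)$ and generates a strongly continuous unitary group $e^{-itL}$. Since $e^{-itL}$ is diagonal in the orthonormal basis $\{\phi_{k,V}\}$, it commutes with $A_V^{3/2}$ and maps $H^3_{(V)}=H^3_{(0)}$ into itself; by equivalence of the norms $\|\cdot\|_{(3)}$ and $\|A_V^{3/2}\cdot\|_{L^2}$ on this common space, it is uniformly bounded on $H^3_{(0)}$ in $t$. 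A function $\xi\in C(J_T;H^3_{(0)})$ then solves \eqref{1.1}--\eqref{1.2} with $\xi(0)=\xi_0$ if and only if it is a fixed point of
\[
\Phi(\xi)(t)=e^{-itL}\xi_0-i\int_0^t e^{-i(t-s)L}\bigl[W\,\Re(\xi(s))+\lag v(s),Q\rag\phi\bigr]\dd s .
\]

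The key point, and the step I expect to be the main obstacle, is the $H^3_{(0)}$-regularity of the Duhamel integral. Because $\phi=\phi_{1,V}$ is an eigenfunction of $A_V$, it is smooth and lies in $H^3_{(0)}$, so the external drive $\lag v(s),Q\rag\phi$ belongs to $H^3$; likewise $W\,\Re(\xi(s))\in H^3$ whenever $\xi(s)\in H^3_{(0)}$. However, neither product satisfies in general the second-order boundary condition $g''|_{x=0,1}=0$ defining $H^3_{(0)}$ (e.g.\ $(Q_j\phi)''|_{x=0,1}=2Q_j'\phi'|_{x=0,1}$ need not vanish), so one cannot simply invoke semigroup theory on the scale $H^3_{(0)}$. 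This is exactly the hidden-regularity phenomenon, in the spirit of Beauchard and Laurent, that I would isolate in and borrow from Proposition~\ref{P:3.1}: for a source $g\in L^2(J_T;H^3)$ not satisfying the boundary conditions, the Duhamel integral $\int_0^t e^{-i(t-s)L}g(s)\dd s$ nonetheless belongs to $C(J_T;H^3_{(0)})$, with its $C(J_T;H^3_{(0)})$-norm controlled by $\|g\|_{L^2(J_T;H^3)}$ (and the free term by $\|\xi_0\|_{(3)}$).

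Granting Proposition~\ref{P:3.1}, the rest is routine. For $\xi,\eta\in C(J_T;H^3_{(0)})$ the difference is $\Phi(\xi)(t)-\Phi(\eta)(t)=-i\int_0^t e^{-i(t-s)L}W\,\Re(\xi(s)-\eta(s))\dd s$, and since multiplication by $W\in H^3(I;\R)$ is bounded on $H^3$, the cited estimate gives
\[
\|\Phi(\xi)-\Phi(\eta)\|_{C(J_T;H^3_{(0)})}\le C\,\|W\|_{H^3}\,T^{1/2}\,\|\xi-\eta\|_{C(J_T;H^3_{(0)})}.
\]
Choosing $T$ small makes $\Phi$ a contraction on $C(J_T;H^3_{(0)})$, yielding a unique local solution; since the problem is linear, the local existence time depends only on $\|W\|_{H^3}$, so concatenating over consecutive short subintervals extends the solution, and its uniqueness, to the whole of $J_T$ for arbitrary $T$.

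Finally, the quantitative bound of Proposition~\ref{P:1.1} follows by applying the estimate of Proposition~\ref{P:3.1} once more to the fixed point: one gets $\|\xi\|_{C(J_T;H^3_{(0)})}\le C'_T\bigl(\|\xi_0\|_{(3)}+\|W\|_{H^3}\|\xi\|_{L^2(J_T;H^3)}+\|v\|_{L^2(J_T;\R^q)}\bigr)$, where I used that $\phi,Q_j\in H^3$ to bound the drive. The self-referential term $\|\xi\|_{L^2(J_T;H^3)}$ is absorbed by a Gr\"onwall iteration over the same short subintervals, which converts it into the final constant $C_T$ and leaves the clean bound $\|\xi\|_{C(J_T;H^3_{(0)})}\le C_T(\|\xi_0\|_{(3)}+\|v\|_{L^2(J_T;\R^q)})$. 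Thus the only genuinely delicate input is the boundary-condition-violating regularity supplied by Proposition~\ref{P:3.1}; everything else is a standard contraction and Gr\"onwall argument.
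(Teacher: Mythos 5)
Your overall route is exactly the paper's: Proposition~\ref{P:1.1} is reduced, via the Duhamel formulation and a contraction argument, to the Beauchard--Laurent hidden-regularity estimate \eqref{3.3}--\eqref{3.4}, which is the content of Proposition~\ref{P:3.1} (the paper's proof of Proposition~\ref{P:3.1} is itself this fixed-point argument, and Proposition~\ref{P:1.1} is then obtained by taking $f=\lag v(t),Q\rag\phi$). However, you misstate the key borrowed fact, and the misstatement is genuinely false rather than a notational slip: you claim that for \emph{any} source $g\in L^2(J_T;H^3)$ the integral $\int_0^t e^{-i(t-s)L}g(s)\,\dd s$ lies in $C(J_T;H^3_{(0)})$ with norm controlled by $\|g\|_{L^2(J_T;H^3)}$. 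The correct hypothesis, both in Proposition~\ref{P:3.1} and in Lemma~1 of \cite{BL-2010}, is $g\in L^2(J_T;H^3\cap H^1_0(I;\C))$: the source must satisfy the first-order Dirichlet condition $g|_{x=0,1}=0$, and only the second-order condition $g''|_{x=0,1}=0$ may be dropped. The reason is visible in the moment computation: integrating $\lag g,\phi_k\rag_{L^2}$ by parts three times produces a boundary term of size $1/k$ proportional to $g\,\phi_k'|_{x=0,1}$, which must vanish identically, whereas the $1/k^3$ boundary term involving $g''$ is the one tamed by the oscillation $e^{i\la_k s}$ inside the time integral. Concretely, for the constant source $g\equiv\mek$ the Duhamel integral has coefficients of size $1/k^3$ along the odd modes, so $\sum_k k^6|a_k|^2$ diverges for a.e.\ $t$ and the integral is \emph{not} in $H^3_{(0)}$; no estimate of the claimed form can hold.

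The gap is repairable, and your own computation supplies the repair. Since $\phi=\phi_{1,V}\in H^3_{(0)}\subset H^1_0$ and $\xi(s)\in H^3_{(0)}$, both sources appearing in your fixed-point map, namely $\lag v(s),Q\rag\phi$ and $W\,\Re(\xi(s))$, vanish at $x=0,1$ and hence belong to $H^3\cap H^1_0$ (your identity $(Q_j\phi)''|_{x=0,1}=2Q_j'\phi'|_{x=0,1}$ already uses $\phi|_{x=0,1}=\phi''|_{x=0,1}=0$). Once the hypothesis of the regularity lemma is restored to $g\in L^2(J_T;H^3\cap H^1_0)$, with the $L^2$-in-time norm taken in $H^3\cap H^1_0$, every remaining step of your argument --- the contraction on a short interval, the concatenation (legitimate because the contraction constant depends only on $\|W\|_{H^3}$ and the interval length, not on the data), and the Gr\"onwall absorption giving the final estimate --- goes through unchanged and coincides with the paper's derivation.
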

Let 
$$
\TT_{\phi}=\{\psi\in L^2(I,\C): \Re \lag\psi,\phi   \rag_{L^2}=0\}
$$ be the tangent space to the unit sphere $\sS$ at $\phi$. As the functions  $Q,V,W, $ and $v$ are real-valued, we have    $\xi(t)\in \TT_{\phi}$
 for any~$t\in J_T$, provided that  $\xi_0\in \TT_{\phi}$.   Let
 $$
 \RR: H^3_{(0)}\times   L^2(J_T;\R^q)\to  C(J_T;H^3_{(0)} ), \quad 
 (\xi_0,v)\mapsto \xi
 $$ be the resolving operator of the problem \eqref{1.1},~\eqref{1.2}, and let $\RR_T$ be its restriction at time~$T$.  
  In     Section \ref{S:1.1},  we show that  this problem is approximately controllable under some saturation condition. Then, in Section \ref{S:1.2}, assuming additionally that the vector space $\Q$  spanned by the components of~$Q$ contains a function~$\mu$   satisfying an inequality similar to~\eqref{0.5},  we~prove   exact controllability of the problem.

\subsection{Approximate controllability}\label{S:1.1}

Assume that the field $Q$ and the potentials $V,W$ are smooth.~Let   us define finite-dimensional vector spaces by
$$
\HH=\lspan_\R\{Q_j \phi:\,\, j=1,\ldots,q\}
$$
and  
\begin{equation}\label{1.3}
\FF(\HH)=\lspan_\R\left\{f+i(-\p_{xx}^2 g + Vg-\la g+ W \Re(g))  :\,\,  f,g\in \HH\right\}.
\end{equation} In the spirit of the papers~\cite{AS-2005,AS-2006,shirikyan-cmp2006},
we define a   non-decreasing  sequence of finite-dimensional spaces    $\{\HH_j\}$ in the following way 
  \begin{equation}\label{1.4}
\HH_0=\HH,\quad \HH_j=\FF(\HH_{j-1})\quad \textrm{for} \,\,\, j \geq
1,\quad \HH_\infty=\bigcup_{j=0}^\infty \HH_j. 
\end{equation} 
Let   ${\mathsf P}_1$   be   the orthogonal projection onto the closed subspace $H^3_{(0)}\cap \TT_{\phi}$ in~$H^3_{(0)}$.
\begin{definition}\label{D:1.2}
   We    say that a field $Q$ is  saturating  for the   problem  \eqref{1.1}, \eqref{1.2}        if~$\HH_\ty \subset H^3_{(0)}$ and  the projection   ${\mathsf P}_1\HH_\ty$  is dense in $ H^3_{(0)}\cap \TT_{\phi}$. 
\end{definition}
 \begin{proposition}\label{P:1.3}
 Assume that $Q$ is saturating.~Then
 the   problem  \eqref{1.1}, \eqref{1.2}       is approximately controllable in the sense that  the image of the linear mapping 
 $$
 \RR_T(0,\cdot):L^2(J_T;\R^q)\to   H^3_{(0)}\cap \TT_{\phi}, \quad v\mapsto \xi(T)
 $$   is~dense in~$ H^3_{(0)}\cap \TT_{\phi}$ for $T>0$.\end{proposition}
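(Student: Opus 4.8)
The plan is to establish approximate controllability by a duality argument: I reduce the density of the image of $\RR_T(0,\cdot)$ to the triviality of the kernel of its adjoint, and then exhaust the directions of $\HH_\ty$ by differentiating in time an observation function built from the homogeneous flow. This is the infinite-dimensional counterpart of the Kalman rank condition, the map $\FF$ of \eqref{1.3} playing the role of ``adjoin one commutator with the drift''. Concretely, by Proposition~\ref{P:1.1} the map $\RR_T(0,\cdot)\colon L^2(J_T;\R^q)\to H^3_{(0)}\cap\TT_\phi$ is continuous and $\R$-linear between real Hilbert spaces, so its image is dense if and only if its adjoint is injective, i.e. if and only if the only $\eta\in H^3_{(0)}\cap\TT_\phi$ with $\lag\RR_T(0,v),\eta\rag_{(3)}=0$ for every $v\in L^2(J_T;\R^q)$ is $\eta=0$. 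I fix such an $\eta$ and aim to prove $\eta=0$.

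Next I would insert the Duhamel representation $\RR_T(0,v)=\int_0^T U(T-s)\bigl(-i\lag v(s),Q\rag\phi\bigr)\,\dd s$, where $U(\cdot)$ is the $\R$-linear propagator of the homogeneous equation associated with \eqref{1.1}. Since $v$ is arbitrary and $s\mapsto-i\lag v(s),Q\rag\phi$ ranges over all of $L^2(J_T;-i\HH)$, the orthogonality condition becomes the vanishing, for a.e. $\tau\in J_T$ and every $\psi\in\HH$, of the scalar function $g_\psi(\tau)$ obtained by pairing $U(\tau)(-i\psi)$ against $\eta$ through the adjoint above. Because $Q,V,W$ and $\phi$ are smooth and $\HH_\ty\subset H^3_{(0)}$, every $\psi\in\HH$ lies in the domain of all powers of the generator, so $g_\psi\in C^\infty(J_T)$; vanishing a.e. then forces $g_\psi\equiv0$ on $J_T$, and hence $g_\psi^{(k)}(0)=0$ for all $k\ge0$.

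Differentiating under the pairing gives $g_\psi^{(k)}(0)=\lag(-iL)^{k}(-i\psi),\eta\rag$, where $L\zeta=-\p_{xx}^2\zeta+V\zeta-\la\zeta+W\Re(\zeta)$ is the operator in \eqref{1.1}. Thus $\eta$ is orthogonal to every iterated-generator direction produced from $\HH$. By the very construction of $\FF$ and of the non-decreasing family $\HH_j$ in \eqref{1.4} — each step adjoining the image of $g\mapsto iLg$ — the closed real span of these directions coincides, after applying ${\mathsf P}_1$, with $\overline{{\mathsf P}_1\HH_\ty}$; since $\eta\in H^3_{(0)}\cap\TT_\phi$ and ${\mathsf P}_1$ is the orthogonal projection onto this subspace, it follows that $\eta\perp{\mathsf P}_1\HH_\ty$. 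The saturation hypothesis asserts that ${\mathsf P}_1\HH_\ty$ is dense in $H^3_{(0)}\cap\TT_\phi$, whence $\eta=0$, which is exactly the desired conclusion.

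The main obstacle is the last step: identifying the directions $(-iL)^{k}(-i\psi)$ with $\HH_\ty$ and carrying this out inside $H^3_{(0)}\cap\TT_\phi$ rather than in $L^2$. This is delicate precisely because the term $W\Re(\cdot)$ makes both $L$ and $U(\cdot)$ only $\R$-linear, so multiplication by $i$ does not commute with the generator and one must follow the real and imaginary parts separately; the identity $(-\p_{xx}^2+V-\la)\phi=(A_V-\la)\phi=0$ for the ground state is what keeps the reached directions inside the tangent space $\TT_\phi$. The regularity that legitimises both the termwise differentiation and the adjoint manipulation is exactly what $\HH_\ty\subset H^3_{(0)}$ and the smoothness of the data furnish through Proposition~\ref{P:1.1}; in particular no analyticity in $\tau$ is needed, since a smooth function vanishing on an interval has all its derivatives zero there.
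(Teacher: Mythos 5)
Your overall strategy coincides with the paper's: reduce density of the image to injectivity of the adjoint, generate test directions by differentiating in time the pairing of the homogeneous flow against a putative orthogonal element $\eta$, and conclude with the saturation hypothesis (the paper takes one derivative at a time at intermediate times $T_1$ and iterates through the group property, rather than taking all derivatives at $\tau=0$; that difference is cosmetic). The genuine gap is the step you yourself call ``the main obstacle'' and then assert away: the claim that the closed real span of the directions $(-iL)^{k}(-i\psi)$, $\psi\in\HH$, coincides, after applying ${\mathsf P}_1$, with $\overline{{\mathsf P}_1\HH_\ty}$. This is false, not merely delicate. Because of the term $W\Re(\cdot)$, the operator $iL$ is only $\R$-linear, so iterating it on the seeds $-i\psi\in i\HH$ produces a space genuinely different from $\HH_\ty$, which by \eqref{1.3}, \eqref{1.4} is seeded on $\HH$ itself. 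Concretely, in the situation of Proposition~\ref{P:1.5} ($V=0$, $\la=\pi^2$, $W=2\kappa\phi_1^2$, $\phi=\phi_1$, $\HH=\lspan_\R\{\phi_1,\phi_2\}$), the seed $\psi=\phi_1$ gives
$$
L(-i\phi_1)=-i\bigl(-\p_{xx}^2\phi_1-\pi^2\phi_1\bigr)+W\Re(-i\phi_1)=0,
$$
so the family you generate from $\phi_1$ collapses to the single direction $i\phi_1$, while the same seed inside $\HH_\ty$ produces $iL\phi_1=2i\kappa\phi_1^3\neq0$ (for $\kappa\neq0$) and, upon iteration, infinitely many independent directions. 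Taking both seeds, your family is contained in $\lspan_\R\{i\phi_1\}+\lspan_\R\{\phi_{2k},\,i\phi_{2k}:k\ge1\}$, since multiplication by $\phi_1^2$ preserves the parity of the modes; it never reaches $\phi_3$ or $i\phi_3$, whereas ${\mathsf P}_1\HH_\ty$ is dense by Proposition~\ref{P:1.5}. So the saturation hypothesis, which speaks of $\HH_\ty$, gives no information about the directions your argument actually produces, and no refinement of the claimed identification can repair this.

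It is worth seeing how the paper's proof avoids the issue: in its Step~1 it does not carry the Duhamel factor $-i$, but instead studies the auxiliary operator $\aA_1(v)=\int_0^T R(T,\tau){\mathsf P}_\HH v(\tau)\,\dd\tau$, whose inputs range over $L^2(J_T;\HH)$; evaluation at $\tau=T$ and one differentiation in $\tau$ then produce exactly the elements $f+iL(g)$ with $f,g\in\HH_j$, i.e.\ the spaces $\HH_{j+1}$ of Definition~\ref{D:1.2}. Your bookkeeping, which faithfully keeps the $-i$ that the equation attaches to the control, leads instead to seeds in $i\HH$; reconciling that factor with Definition~\ref{D:1.2} is precisely the nontrivial point, and your proof does not address it (nor, one may note, does the paper's reduction step justify it explicitly --- your computation is a good way to see that the distinction is substantive, since $R(T,\tau)$ and multiplication by $i$ do not commute). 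A secondary gap of the same origin: you justify $g_\psi\in C^\infty$ by ``$\HH_\ty\subset H^3_{(0)}$'', but that hypothesis controls the iterates $(iL)^k\psi$, not the iterates of $-i\psi$ on which your derivatives actually fall; the paper's one-derivative-at-a-time iteration is designed so that each differentiation is applied to an element of some $\HH_j\subset H^3_{(0)}$ and never needs more. As written, therefore, your argument does not prove the proposition.
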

\begin{proof} 	{\it Step~1.~Reduction.} For any  $0\le \tau\le t\le T$,
let
 $$R(t,\tau):H^3_{(0)}  \to H^3_{(0)},\quad  \xi_0\mapsto \xi(T)
 $$  be   the  resolving operator of the   problem 
  \begin{gather*}
 	i \p_t \xi=-\p_{xx}^2 \xi +V(x)\xi-\la\xi+ W(x) \Re(\xi),\\
 	 	\xi(t,0)=\xi(t,1)=0,\\
 	 	\xi(\tau,x)=\xi_0. 
 	 	\end{gather*}
  Let the operator $ \aA:L^2(J_T; H^3_{(0)}) \to H^3_{(0)}$ be defined by
  	$$
	 \aA(v)= \int_0^T R(T,\tau) v(\tau)\, \dd \tau, \quad v \in L^2(J_T; H^3_{(0)}),
	$$    
	    and let     ${\mathsf P}_\HH$   be   the orthogonal projection onto~$\HH$ in~$H^3_{(0)}$.~The proposition will be proved if we show that 
	  the image of the   operator 
	$$
	\aA_1:L^2(J_T, H_{(0)}^3)\to H^3_{(0)}\cap \TT_{\phi}, \quad   \aA_1=\aA{\mathsf P}_\HH
	$$is dense in $H^3_{(0)}\cap \TT_{\phi}$. 
	The latter will be achieved by showing that
	  the kernel of the adjoint $\aA_1^*$ of $\aA_1$  is trivial. Note that   $\aA_1^*$  is  given by 
	$$
	\aA_1^*: H^3_{(0)}\cap \TT_{\phi} \to L^2(J_T, \HH), \quad z\mapsto {\mathsf P}_\HH  R(T,\cdot)^* z,
	$$ 
	 where $  R(T,\tau)^*:H^3_{(0)}\to H^3_{(0)}$ is the $H^3_{(0)}$-adjoint of $R(T,\tau)$, $\tau \in J_T.$  
	
	\smallskip
	{\it Step~2.~Triviality of the kernel of~$ \aA_1^*$.}~Let   $z$ be an arbitrary element of the kernel of~$ \aA_1^*$.~Our goal is to show that~$z=0$. To this end, we take any~$g\in\HH$ and note that   
$$
	\lag g,R (T,\tau)^*z\rag_{(3)}=0\quad \text{for almost any $\tau\in J_T$}.
$$ By continuity in $\tau$  of $R (T,\tau) g$, this is equivalent to  
\begin{equation}\label{1.5}
	\lag R (T,\tau) g, z\rag_{(3)}=0\quad \text{for any  $\tau\in J_T$}.
\end{equation}Taking $\tau=T$ in this equality, we see that  $z$ is orthogonal to $\HH$ in $H^3_{(0)}$. In what follows,  we  show that $z$ is orthogonal to   $\HH_j$    for any~$j\ge1$. This, together with the saturation  assumption, will imply that $z=0.$

 Let us fix any $T_1\in (0,T)$ and 
  rewrite \eqref{1.5} as follows:
\begin{equation} \label{1.6}
	\lag R (T_1,\tau) g, R(T,T_1)^*z\rag_{(3)}=0\quad \text{for any  $\tau\in J_{T_1}$}.
\end{equation}  Note that $\zeta(\tau)=R (T_1,\tau) g$  is the solution of the problem
   \begin{gather*} 
 	i \p_\tau \zeta=-\p_{xx}^2 \zeta +V(x)\zeta-\la\zeta+ W(x)\Re(\zeta),\\
 	 	\zeta(\tau,0)=\zeta(\tau,1)=0,\\
 	 	\zeta(T_1,x)=g(x). 
 	 	\end{gather*} 
Taking the derivative  of \eqref{1.6} in $\tau$ and choosing $\tau=T_1$, we get   
$$
	\lag i(-\p_{xx}^2 g +Vg-\la g+ W \Re(g)), R (T,T_1)^*z\rag_{(3)}=0\quad \text{for any  $g\in \HH$}.
$$Thus
$$
	\lag g, R(T,T_1)^*z\rag_{(3)} =0\quad \text{for any  $g\in \HH_1$}.
$$
As  $T_1\in (0,T)$ is arbitrary,   we see that $z$ is orthogonal to   $\HH_1$  in $H^3_{(0)}$. Iterating this argument, we derive   orthogonality of $z$ to   $\HH_j$ for any $j\ge1$. Since  ${\mathsf P}_1\HH_\ty$  is dense in $ H^3_{(0)}\cap \TT_{\phi}$, we conclude that $z=0$.    \end{proof}
 Let us close this section with   an example of saturating field $Q$. This example will be used in the proof of the Main Theorem formulated in the Introduction.  
 Let us   introduce the operator
\begin{equation}\label{1.7}
A_\kappa g    =-\p_{xx}^2 g -\pi^2g+2 \kappa  \phi_1^2  g, \quad\quad \D(A_\kappa)=H^2\cap H^1_0(I;\R),
\end{equation}and
let $\la_{1,\kappa}< \ldots<\la_{k,\kappa}<\ldots $ be the sequence of its eigenvalues.~The following lemma is proved in Section~\ref{S:3.3}. 
\begin{lemma}\label{L:1.4}
	There is an at most countable set $\K\subset (-\ty,0]$ such that, for any~$\kappa\in \R\setminus \K$ and any $k\ge1$, we~have~$\la_{k,\kappa}\neq 0$. 
\end{lemma}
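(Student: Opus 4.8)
The plan is to exploit that $A_\kappa=A_0+2\kappa\phi_1^2$ depends on $\kappa$ only through the bounded, nonnegative multiplication operator $2\kappa\phi_1^2$, where $A_0=-\p_{xx}^2-\pi^2$ and $\phi_1^2=2\sin^2(\pi x)$ is \emph{strictly positive} on the open interval $I$. Since $A_\kappa$ is self-adjoint, bounded from below, and has compact resolvent, its spectrum is a sequence $\la_{1,\kappa}<\la_{2,\kappa}<\cdots\to+\ty$. The heart of the argument is to show that, for each fixed $k\ge1$, the map $\kappa\mapsto\la_{k,\kappa}$ is \emph{strictly} increasing on $\R$. A strictly monotone function vanishes at most once, so the set $\K_k=\{\kappa:\la_{k,\kappa}=0\}$ contains at most one point, and therefore $\K=\bigcup_{k\ge1}\K_k$ is at most countable.

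To establish strict monotonicity I would use the Courant--Fischer min--max principle. Let $q_\kappa(g)=\int_I\big((\p_x g)^2-\pi^2 g^2+2\kappa\phi_1^2 g^2\big)\dd x$ denote the quadratic form of $A_\kappa$, with form domain $H^1_0(I;\R)$, so that $\la_{k,\kappa}=\min_{\dim E=k}\max_{g\in E\setminus\{0\}}q_\kappa(g)/\|g\|_{L^2}^2$. For $\kappa_1<\kappa_2$ and any $g\not\equiv0$ one has $q_{\kappa_2}(g)-q_{\kappa_1}(g)=2(\kappa_2-\kappa_1)\int_I\phi_1^2 g^2\dd x>0$, the strict inequality being exactly where positivity of $\phi_1^2$ on $I$ is used. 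Taking $E$ to be the span of the first $k$ eigenfunctions of $A_{\kappa_2}$ and letting $h^*\in E$, $\|h^*\|_{L^2}=1$, realise the maximum of $q_{\kappa_1}$ over $E$, I get
$$\la_{k,\kappa_1}\le\max_{g\in E\setminus\{0\}}\frac{q_{\kappa_1}(g)}{\|g\|_{L^2}^2}=q_{\kappa_1}(h^*)<q_{\kappa_2}(h^*)\le\max_{g\in E\setminus\{0\}}\frac{q_{\kappa_2}(g)}{\|g\|_{L^2}^2}=\la_{k,\kappa_2},$$
which yields $\la_{k,\kappa_1}<\la_{k,\kappa_2}$. (Equivalently, since the perturbation is analytic and Dirichlet Sturm--Liouville eigenvalues are simple, one may differentiate and invoke the Feynman--Hellmann identity $\tfrac{d}{d\kappa}\la_{k,\kappa}=2\int_I\phi_1^2\phi_{k,\kappa}^2\dd x>0$, with $\phi_{k,\kappa}$ the normalised eigenfunction.)

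It remains to locate $\K$ inside $(-\ty,0]$. At $\kappa=0$ the eigenfunctions are the $\phi_k$ and $\la_{k,0}=(k^2-1)\pi^2\ge0$ for every $k$, with equality precisely for $k=1$. By the strict monotonicity just proved, for every $\kappa>0$ and every $k\ge1$ we have $\la_{k,\kappa}>\la_{k,0}\ge0$, so $A_\kappa$ has no zero eigenvalue when $\kappa>0$; hence $\K\cap(0,\ty)=\emptyset$, i.e.\ $\K\subset(-\ty,0]$. Together with the countability obtained above, this proves the lemma. The only genuine subtlety, and the step I would treat most carefully, is the passage from monotonicity ($\ge$) to \emph{strict} monotonicity ($>$): this is what guarantees at most one zero crossing per eigenvalue, and it hinges entirely on the strict positivity of $\phi_1^2$ on the interior of $I$ together with the care needed to place the maximizer $h^*$ correctly in the min--max estimate; the rest is routine spectral theory.
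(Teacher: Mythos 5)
Your proposal is correct, and its overall skeleton coincides with the paper's: for each fixed $k$ prove that $\kappa\mapsto\la_{k,\kappa}$ is strictly increasing, deduce that each set $\{\kappa:\la_{k,\kappa}=0\}$ has at most one point so that $\K$ is a countable union of such sets, and then use $\la_{k,0}=(k^2-1)\pi^2\ge0$ together with monotonicity to conclude $\la_{k,\kappa}>0$ for $\kappa>0$, hence $\K\subset(-\ty,0]$. Where you genuinely differ is in the proof of strict monotonicity. The paper takes the perturbation-theoretic route that you relegate to a parenthesis: it cites P\"oschel--Trubowitz (Theorem~3, Chapter~2) for real-analytic dependence of $\la_{k,\kappa}$ and $\phi_{k,\kappa}$ on $\kappa$, differentiates the eigenvalue identity in $\kappa$, and pairs with $\phi_{k,\kappa}$ to obtain the Hellmann--Feynman formula $\frac{\dd\la_{k,\kappa}}{\dd\kappa}=\lag 2\phi_1^2,\phi_{k,\kappa}^2\rag_{L^2}>0$. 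Your main argument instead runs through Courant--Fischer: you compare the quadratic forms $q_{\kappa_1}$ and $q_{\kappa_2}$ on the span of the first $k$ eigenfunctions of $A_{\kappa_2}$, with the strict inequality coming from $\int_I\phi_1^2 (h^*)^2\,\dd x>0$ for $h^*\not\equiv0$ (valid since $\phi_1^2>0$ on the interior of $I$ and $H^1_0$ functions are continuous), and your placement of the maximiser $h^*$ in the chain of inequalities is done correctly. Your route is more elementary and self-contained: it requires no smoothness (or even continuity) of the eigenvalue branches in $\kappa$, no simplicity of the eigenvalues, and no external spectral-theory reference; the paper's route is shorter given the cited analyticity result and yields the explicit derivative formula as a by-product. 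Both are complete proofs of the lemma.
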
Recall that $\Q$ denotes the vector space spanned by the components of $Q$ (see~\eqref{0.4}).
The following proposition is proved in Section~\ref{S:3.4}.
\begin{proposition}\label{P:1.5}
Let   $V(x) = 0$ and   $W(x) =  2 \kappa  \phi_1^2(x)$ for any $x\in I$, let $\kappa\in \R\setminus \K$, where $\K$ is the set in Lemma~\ref{L:1.4},  and
let  $Q$ be a smooth field such that~$
\mek$, $\cos(\pi x) \in \Q$.~Then  $Q$ is saturating in the sense of Definition~\ref{D:1.2}. \end{proposition}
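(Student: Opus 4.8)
The plan is to make the operator $\FF$ from \eqref{1.3} completely explicit in the Dirichlet eigenbasis $\{\phi_k\}$ and then read off $\HH_\infty$. Since the $Q_j$ and $\phi=\phi_1$ are real-valued, every element of $\HH=\HH_0$ is real; with $V=0$, $\la=\pi^2$, $W=2\kappa\phi_1^2$, the operator inside \eqref{1.3} acts on $g=g_1+ig_2$ (with $g_1,g_2$ real) as $-\p_{xx}^2g+Vg-\la g+W\Re(g)=A_\kappa g_1+iA_0g_2$, where $A_\kappa$ is the operator \eqref{1.7} and $A_0:=-\p_{xx}^2-\pi^2$. Writing $\HH_j=X_j+iY_j$ with $X_j,Y_j$ real, the construction \eqref{1.4} becomes the recursion $X_{j+1}=X_j+A_0Y_j$ and $Y_{j+1}=Y_j+A_\kappa X_j$, so that
\[
X_\infty=\lspan_\R\{(A_0A_\kappa)^m h:\ h\in\HH_0,\ m\ge0\},\qquad Y_\infty=A_\kappa X_\infty.
\]
Using $\phi_1^2=1-\cos(2\pi x)$ and a product-to-sum identity, I would record that $A_0\phi_k=(k^2-1)\pi^2\phi_k$ and
\[
A_\kappa\phi_k=\big((k^2-1)\pi^2+2\kappa\big)\phi_k-\kappa\phi_{k-2}-\kappa\phi_{k+2}\qquad(\phi_0=0,\ \phi_{-1}=-\phi_1);
\]
thus $A_\kappa$ is tridiagonal with nonzero off-diagonal entries $-\kappa$ and preserves the parity of the index (and $\kappa\neq0$ because $0\in\K$).

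For the real part, the hypotheses $\mek,\cos(\pi x)\in\Q$ give $\phi_1=\mek\,\phi_1\in\HH_0$ and $\tfrac12\phi_2=\cos(\pi x)\,\phi_1\in\HH_0$. Since $A_0$ is invertible on $\lspan\{\phi_k:k\ge2\}$ and the coupling $-\kappa$ is nonzero, the vector $(A_0A_\kappa)^m\phi_1$ has top mode $\phi_{2m+1}$ with nonzero coefficient; this triangular pattern yields $\lspan\{(A_0A_\kappa)^m\phi_1:m\ge0\}=\lspan\{\phi_1,\phi_3,\phi_5,\dots\}$, and starting from $\phi_2$ one generates all even modes. Hence $X_\infty\supseteq\EE:=\lspan_\R\{\phi_k:k\ge1\}$, which is dense in $H^3_{(0)}(I;\R)$. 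Moreover every iterate of $\FF$ applied to a combination of eigenfunctions is again such a combination, so $\HH_\infty\subset\EE+i\EE\subset H^3_{(0)}$, giving the first requirement of Definition~\ref{D:1.2}.

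The main point, and the step I expect to be the real obstacle, is the density of the imaginary part $Y_\infty=A_\kappa\EE$ in $H^3_{(0)}(I;\R)$, and this is exactly where Lemma~\ref{L:1.4} is needed. I would compute the $H^3_{(0)}$-orthogonal complement: suppose $z=\sum_k\hat z_k\phi_k\in H^3_{(0)}$ satisfies $\lag A_\kappa\phi_k,z\rag_{(3)}=0$ for all $k$. Using $\lag\phi_m,\phi_n\rag_{(3)}=\la_m^3\delta_{mn}$ and the symmetry of the matrix of $A_\kappa$ in the basis $\{\phi_k\}$ (which holds since $A_\kappa$ is $L^2$-self-adjoint), this says that $v=(v_k)$ with $v_k:=\la_k^3\hat z_k$ is a formal solution of $A_\kappa v=0$ with $\sum_k\la_k^{-3}|v_k|^2<\infty$. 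The associated three-term recurrence has coefficients of order $k^2$, hence one super-factorially growing and one super-factorially decaying solution; summability forces the growing component to vanish, so $v$ decays faster than any power. Then $\sum_k\la_k^2|v_k|^2<\infty$, so $\tilde z:=\sum_k v_k\phi_k\in\D(A_\kappa)$ is a genuine eigenfunction with $A_\kappa\tilde z=0$. As $\kappa\notin\K$, Lemma~\ref{L:1.4} excludes the eigenvalue $0$; therefore $\tilde z=0$, hence $z=0$, and $A_\kappa\EE$ is dense in $H^3_{(0)}(I;\R)$.

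It remains to assemble the two parts. The orthogonal complement of $H^3_{(0)}\cap\TT_\phi$ in $H^3_{(0)}$ is the line $\R\phi_1$, so $\ppP_1$ merely subtracts the real $\phi_1$-component; consequently the real parts of $\ppP_1\HH_\infty$ consist of the elements of $\EE$ with their $\phi_1$-component removed (dense among real $H^3_{(0)}$-functions orthogonal to $\phi_1$), while the imaginary parts run through $A_\kappa\EE$ (dense in real $H^3_{(0)}$ by the previous step). Since real and imaginary functions are orthogonal in $H^3_{(0)}$, these two density statements combine to give density of $\ppP_1\HH_\infty$ in $H^3_{(0)}\cap\TT_\phi$, which is the second requirement of Definition~\ref{D:1.2}. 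Hence $Q$ is saturating.
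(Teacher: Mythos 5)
Your proof is correct in substance, and its skeleton is in fact the same as the paper's: reduce to the two directions $\mek$, $\cos(\pi x)$; show that the real part of $\HH_\ty$ contains every Dirichlet mode $\phi_k$; observe that the imaginary part then contains $A_\kappa\EE$, where $\EE=\lspan_\R\{\phi_k:k\ge1\}$; prove that $A_\kappa\EE$ is dense in $H^3_{(0)}(I;\R)$ using $\kappa\notin\K$; and assemble, using that real and imaginary functions are $\lag\cdot,\cdot\rag_{(3)}$-orthogonal. Your iteration of $A_0A_\kappa$ is literally the paper's double application of $F$ (for real $g$ one has $F(F(g))=-A_0A_\kappa g$), and your ``top mode $\phi_{2m+1}$ with nonzero coefficient'' triangularity is the paper's induction via the identity $\phi_1^2\phi_{2N-1}=\tfrac12\left(2\phi_{2N-1}-\phi_{2N-3}-\phi_{2N+1}\right)$; your final projection step is, if anything, more careful than the paper's, which proves density of $\HH_\ty$ in all of $H^3_{(0)}$ and then projects.

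The one step you treat genuinely differently is the density of $A_\kappa\EE$, and this is also where your write-up has its only real gap. The paper gets density from the spectral decomposition of $A_\kappa$: since $\la_{k,\kappa}\neq0$ for all $k$, each eigenfunction $\phi_{k,\kappa}=\la_{k,\kappa}^{-1}A_\kappa\phi_{k,\kappa}$ lies in the image, and these are complete. You instead take $z$ orthogonal to $A_\kappa\EE$ in $H^3_{(0)}$, use symmetry of the tridiagonal matrix to turn it into a formal kernel vector $v$ of $A_\kappa$ with $\sum_k\la_k^{-3}|v_k|^2<\ty$, and then invoke a dominant/minimal dichotomy for the three-term recurrence to upgrade $v$ to a genuine element of $\D(A_\kappa)$ in the kernel, which Lemma~\ref{L:1.4} excludes. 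This is a valid alternative: it avoids relying on completeness of the perturbed eigenfunctions $\phi_{k,\kappa}$ in the $H^3_{(0)}$ topology (a fact the paper uses implicitly), but it costs you the asymptotic dichotomy, which you assert without proof; note that the Poincar\'e--Perron theorem does not apply verbatim here because the coefficients are unbounded. To close the gap, either cite the Perron--Kreuser theorem or argue directly, which is easy for this recurrence: writing it as $v_{k+2}=a_kv_k-v_{k-2}$ with $|a_k|\sim k^2\pi^2/|\kappa|$, if at some large $k$ one has $|v_k|\ge|v_{k-2}|$ with $v_k\neq0$, then $|v_{k+2}|\ge(|a_k|-1)|v_k|$ and the solution grows super-factorially forever after, contradicting $|v_k|=o(k^3)$; otherwise $|v_k|$ is eventually non-increasing, and then $|a_k|\,|v_k|\le|v_{k+2}|+|v_{k-2}|\le 2|v_{k-2}|$ yields exactly the super-factorial decay you need. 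One last small point: your inclusion $\HH_\ty\subset\EE+i\EE$ (needed for $\HH_\ty\subset H^3_{(0)}$) presupposes $\HH_0\subset\EE$, i.e.\ the reduction to $q=2$, $Q=(\mek,\cos(\pi x))$; the paper makes this reduction explicit, while you use it tacitly.
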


\subsection{Exact controllability}\label{S:1.2}

 The aim of this section is to prove the following proposition. 
\begin{proposition}\label{P:1.6}
Let $V$ and $W$ be smooth functions such that the following boundary conditions are verified:
\begin{equation}\label{1.8}
 	W(0)=W(1)=W'(0)=W'(1)=0.
 \end{equation} 
Moreover,    assume that $Q$ is saturating, and   there is a function $\mu \in \Q$ satisfying the inequality
\begin{equation}\label{1.9}
|\lag \mu \phi, \phi_{k,V}\rag_{L^2}|	\ge \frac{c}{k^3}, \quad k\ge1
\end{equation}for some $c>0.$
 Then the   problem  \eqref{1.1}, \eqref{1.2} is exactly controllable in the sense that  the   mapping $\RR_T(0,\cdot):L^2(J_T;\R^q)\to   H^3_{(0)}\cap \TT_{\phi}$  is surjective for any $T>0$. 
\end{proposition}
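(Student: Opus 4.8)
The plan is to establish surjectivity of $\RR_T(0,\cdot)$ by writing it as the sum of a surjective map and a compact one, exactly along the lines sketched in the Introduction. By linearity, for $\xi_0=0$ I decompose the solution of \eqref{1.1}, \eqref{1.2} as $\xi=\xi_1+\xi_2$, where $\xi_1$ solves the equation with the coupling term $W\Re(\cdot)$ removed but the control kept (the analogue of \eqref{0.7}, with $\xi_1(0)=0$) and $\xi_2$ solves the equation with the control removed but forced by $W\Re(\xi_1+\xi_2)$ (the analogue of \eqref{0.8}, with $\xi_2(0)=0$). This gives $\RR_T(0,\cdot)=\mathcal L+\mathcal K$, where $\mathcal L v=\xi_1(T)$ and $\mathcal K v=\xi_2(T)$; both are continuous from $L^2(J_T;\R^q)$ into $H^3_{(0)}\cap\TT_{\phi}$ by Proposition~\ref{P:1.1}. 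It then suffices to show that $\mathcal L$ is surjective and $\mathcal K$ is compact.

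Surjectivity of $\mathcal L$ I would obtain by the moment-problem method of~\cite{BL-2010}. Expanding in the eigenbasis $\{\phi_{k,V}\}$ of $A_V$ and using the control only in the direction of the function $\mu\in\Q$ from~\eqref{1.9}, the identity $\xi_1(T)=\sum_k a_k\phi_{k,V}$ reduces to the trigonometric moment problem $\int_0^T e^{i(\lambda_{k,V}-\lambda)\tau}w(\tau)\,\dd\tau=d_k$, where $d_k$ is proportional to $a_k/\lag\mu\phi,\phi_{k,V}\rag_{L^2}$. Since the gaps $\lambda_{k+1,V}-\lambda_{k,V}\to\infty$, Ingham's inequality makes $\{e^{i(\lambda_{k,V}-\lambda)\tau}\}$ a Riesz sequence in $L^2(J_T)$ for every $T>0$; combined with the lower bound $|\lag\mu\phi,\phi_{k,V}\rag_{L^2}|\ge c/k^3$, which matches the $k^3$-weights defining $H^3_{(0)}$, this shows every target in $H^3_{(0)}\cap\TT_{\phi}$ is attained by some $w\in L^2(J_T)$ (the finitely many constraints coming from the tangent-space condition at $\phi$ are handled as in~\cite{BL-2010}).

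The compactness of $\mathcal K$ is the heart of the matter and the step I expect to be the main obstacle. Writing $U(t)=e^{-it(A_V-\lambda)}$ and $\xi=\RR(0,v)$, Duhamel gives
$$
\mathcal K v=\xi_2(T)=-i\int_0^T U(T-\tau)\,W\Re(\xi(\tau))\,\dd\tau .
$$
The mechanism for compactness is a gain of smoothing extracted by integrating by parts in time: using $\tfrac{\dd}{\dd\tau}U(T-\tau)=i(A_V-\lambda)U(T-\tau)$ and $\xi(0)=0$, one trades each time derivative of the source for a factor $(A_V-\lambda)^{-1}$, i.e.\ for two spatial derivatives, producing a leading term $-(A_V-\lambda)^{-1}W\Re(\xi(T))$. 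Here two structural facts are essential: the reality of $V,W,Q,\phi$ yields $\Re(\p_t\xi)=(A_V-\lambda)\Im(\xi)$, and the ground-state relation $(A_V-\lambda)\phi=0$ makes the control term disappear under further time differentiation; together they guarantee that the time derivatives of $W\Re(\xi)$ do not bring in the control, so no regularity beyond $v\in L^2$ is required. The boundary conditions~\eqref{1.8} ensure that multiplication by $W$ maps $H^3_{(0)}$ into itself, so the leading term lies in $H^5_{(0)}$, which embeds compactly into $H^3_{(0)}$. The remaining term I would treat by an Aubin--Lions/Arzel\`a--Ascoli argument based on the uniform bounds $\xi\in C(J_T;H^3_{(0)})$ and $\p_t\xi\in C(J_T;H^1)$ furnished by Proposition~\ref{P:1.1} and the equation. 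The delicate point --- and the reason this is the main obstacle --- is to make the compactness land in the \emph{full} space $H^3_{(0)}$ in which $\mathcal L$ is surjective, rather than in a slightly weaker space; this forces one to exploit the oscillation of $U(T-\tau)$ in the time integral quantitatively, not merely a crude regularity estimate.

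Once both properties are in hand, the conclusion is immediate. By the functional-analysis result quoted in the Introduction, the sum $\mathcal L+\mathcal K$ of a surjective continuous map and a compact one has closed range in $H^3_{(0)}\cap\TT_{\phi}$. On the other hand, $Q$ is saturating by hypothesis, so Proposition~\ref{P:1.3} guarantees that the range of $\RR_T(0,\cdot)=\mathcal L+\mathcal K$ is dense in $H^3_{(0)}\cap\TT_{\phi}$. A continuous linear operator with closed and dense range is surjective; therefore $\RR_T(0,\cdot)$ is onto $H^3_{(0)}\cap\TT_{\phi}$ for every $T>0$, which is the asserted exact controllability.
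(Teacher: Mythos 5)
Your proposal follows exactly the architecture of the paper's own proof of Proposition~\ref{P:1.6}: your $\mathcal L$ and $\mathcal K$ are the paper's $\RR^1_T(0,\cdot)$ and $\RR^2_T(0,\cdot)$, your moment-problem/Ingham argument is the paper's Lemma~\ref{L:1.7} (via Corollary~1 of \cite{BL-2010} and assumption~\eqref{1.9}), and your concluding step (surjective $+$ compact has closed range, density from saturation via Proposition~\ref{P:1.3}, hence surjectivity) is verbatim the end of the paper's proof, resting on Lemma~\ref{L:3.3}. All of that is correct.

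The gap is precisely where you place it yourself: the compactness of $\mathcal K$ is never established, and the mechanism you sketch cannot establish it. After one integration by parts (and after projecting off the ground state, since $\phi\in\Ker(A_V-\la)$ so $(A_V-\la)^{-1}$ only exists on $\{\phi\}^{\perp}$), you get
\[
\mathcal K v=-(A_V-\la)^{-1}W\Re\bigl(\xi(T)\bigr)+(A_V-\la)^{-1}\int_0^T U(T-\tau)\,W(A_V-\la)\Im\bigl(\xi(\tau)\bigr)\,\dd\tau .
\]
The boundary term is indeed a compact function of $v$, but the remainder is exactly as strong as what you started from: $W(A_V-\la)\Im\xi$ is two derivatives rougher than $W\Re\xi$, and the outer $(A_V-\la)^{-1}$ returns exactly those two derivatives, so the net gain is zero and iterating is circular. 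Moreover your reality trick works only once: $\p_\tau\Im\xi=-(A_V-\la)\Re\xi-W\Re\xi-\lag v,Q\rag\phi$, and $(A_V-\la)(Q_j\phi)=-Q_j''\phi-2Q_j'\phi'\neq 0$, so at the second integration by parts the control, which is only $L^2$ in time, reappears. Finally, the Aubin--Lions/Arzel\`a--Ascoli bounds you quote ($\xi$ bounded in $C(J_T;H^3_{(0)})$, $\p_t\xi$ in $C(J_T;H^1)$) yield compactness only into $C(J_T;H^{3-\e})$, not into $H^3_{(0)}$ --- which, as you concede, is useless, because $\mathcal L$ is surjective onto $H^3_{(0)}\cap\TT_\phi$ and the closed-range lemma needs both operators in the same space. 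So the decisive lemma of your proof is missing.

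For comparison, the paper fills this step by a soft argument: Lemma~\ref{L:3.2} asserts that the Duhamel map $G_f(t)=\int_0^t e^{-iA_V(t-s)}f(s)\,\dd s$ is compact from $L^2(J_T;H^3_{(0)})$ into $C(J_T;H^3_{(0)})$, Corollary~\ref{C:3.3} upgrades this (using~\eqref{1.8}) to compactness of $\eta\mapsto\xi_2$, and Lemma~\ref{L:1.8} writes $\RR^2_T(0,\cdot)$ as a composition of continuous maps with that compact map. You should be aware, however, that your instinct that this point is genuinely delicate is well founded, and that citing the lemma does not make the difficulty disappear: resonant sources show that $G$ gains nothing. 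Taking $V=0$ and $f_k(s)=e^{-ik^2\pi^2 s}k^{-3}\phi_k$, which is bounded in $L^2(J_T;H^3_{(0)})$, one computes $G_{f_k}(t)=t\,e^{-ik^2\pi^2 t}k^{-3}\phi_k$; these are pairwise orthogonal in $H^3_{(0)}$ with $\|G_{f_k}(T)\|_{(3)}=T\pi^3$, so $\{G_{f_k}\}$ has no convergent subsequence, contradicting the compactness of $G$ as stated. The same resonance enters the control problem itself, because the diagonal coefficients $\lag W\phi_k,\phi_k\rag_{L^2}\to\int_0^1 W\,\dd x\neq 0$ act as an asymptotically constant frequency shift excited by controls oscillating at $\la_{k,V}-\la$. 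So this step cannot be settled by smoothing, by a crude Arzel\`a--Ascoli argument, or by citation at face value; it requires an argument that confronts the frequency shift produced by $W\Re(\cdot)$ --- for instance diagonalising the full linearised operator, as Beauchard and Laurent do in the Neumann case, or splitting off the resonant diagonal part of $W\Re(\cdot)$ and absorbing it into the moment problem before applying your compactness reasoning to the genuinely off-diagonal remainder.
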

\begin{proof}
 To prove this proposition, we represent the solution $\xi$ of the   problem  \eqref{1.1},~\eqref{1.2}  as follows $\xi=\xi_1+\xi_2$, where $\xi_1$ and $\xi_2$ are the solutions of~problems
\begin{gather*}  
 	i \p_t \xi_1=-\p_{xx}^2 \xi_1+V(x)\xi_1-\la \xi_1  +   \lag v(t),Q(x) \rag \phi, \label{BBS1} \\
 	 	\xi_1(t,0)=\xi_1(t,1)=0, \label{BBS2} \\
 	 	\xi_1(0,x)=0\label{BBS3} 
 	 	\end{gather*} 
and 
\begin{gather*}  
 	i \p_t \xi_2=-\p_{xx}^2 \xi_2+ V(x)\xi_2-\la \xi_2 +W(x) \Re(\xi_1+\xi_2), \label{AAS1} \\
 	 	\xi_2(t,0)=\xi_2(t,1)=0\label{AAS2}   \\
 	 	\xi_2(0,x)=0.\label{AAS3} 
 	 	\end{gather*} 
 	  	Let 
 	  	$$
 	  	\RR^j_T (0,\cdot):    L^2(J_T;\R^q)\to  H^3_{(0)}\cap \TT_{\phi} , \quad v\mapsto \xi_j(T), \quad j=1,2
 	  	$$   be the  resolving operators of these problems.       	\begin{lemma}\label{L:1.7}
 	 		The mapping   $\RR_T^1(0,\cdot):L^2(J_T;\R^q)\to   H^3_{(0)}\cap \TT_{\phi}$  is surjective for any $T>0$. 
 	 	\end{lemma}
 	 	
 	 	\begin{lemma}\label{L:1.8}
 	 		The mapping   $\RR_T^2(0,\cdot):L^2(J_T;\R^q)\to   H^3_{(0)}\cap \TT_{\phi}$  is compact for any~$T>0$. 
 	 	\end{lemma} 	 	
 	 	Taking these lemmas for granted, let us complete the proof of the proposition. We have 
$$
\RR_T(0,\cdot)=\RR_T^1(0,\cdot)+\RR_T^2(0,\cdot),
$$where the linear bounded operators $\RR_T^1(0,\cdot)$ and $\RR_T^2(0,\cdot)$ are, respectively,   surjective     and    compact   from $ L^2(J_T;\R^q)$ to~$ H^3_{(0)}\cap \TT_{\phi}$. 
 Lemma~\ref{L:3.3}
   implies that the image of the mapping $\RR_T(0,\cdot)$ is closed in   $H^3_{(0)}\cap \TT_{\phi}$. On the other hand, by Proposition \ref{P:1.3}, the image of $\RR_T(0,\cdot)$ is dense in $H^3_{(0)}\cap \TT_{\phi}$. We conclude that~$\RR_T(0,\cdot)$ is surjective.
 	 	\end{proof}

 \begin{proof}[Proof of Lemma \ref{L:1.7}]
 Let us consider the problem
   \begin{gather*}  
 	i \p_t \xi_3=-\p_{xx}^2 \xi_3+V (x)\xi_3-\la \xi_3  +     v(t) \mu(x)   \phi,  \\
 	 	\xi_3(t,0)=\xi_3(t,1)=0,  \\
 	 	\xi_3(0,x)=0,
 	 	\end{gather*}
 	 	and denote by $\RR^3_T (0,\cdot):    L^2(J_T;\R)\to  H^3_{(0)}\cap \TT_{\phi}$ its resolving operator.~From the assumption that~$\mu\in\Q$  it follows that  the image of~$\RR^3_T (0,\cdot)$ is contained in that of~$\RR^1_T (0,\cdot)$, so it suffices  to prove the surjectivity of $\RR^3_T (0,\cdot)$. The latter is proved  by rewriting the system  as a moment problem which is then solved using the  	 	Ingham inequality (see Proposition~4 in~\cite{BL-2010}). Indeed, $\xi_3$ satisfies the equality 
 	 	$$
 	 	  \xi_3(t)= -i\int_0^te^{-i(A_V-\la)(t-s)} \left( v(s) \mu(x)   \phi \right) \dd s, \quad t\in J_T.
	$$
	We   write~$\xi_3(T)$   in the form
$$
\xi_3(T)=-i\sum_{k=1}^{+\ty} e^{-i(\la_{k,V}-\la)T} \lag \mu \phi,\phi_{k,V}\rag_{L^2} \phi_{k,V}\int_0^T e^{i (\la_{k,V}-\la) s} v(s) \dd s,
$$
which is equivalent to 
$$
\int_0^T e^{i (\la_{k,V}-\la) s} v(s) \dd s = \frac{i e^{i(\la_{k,V}-\la) T}}{\lag \mu \phi,\phi_{k,V}\rag_{L^2}} \int_0^1 \xi_3(T,x) \phi_{k,V}(x)\dd x, \quad k\ge1.
$$
In view of assumption \eqref{1.9},
for any $\tilde \xi  \in  H^3_{(0)}\cap \TT_{\phi}$, we have
$$
\left\{\frac{i e^{i(\la_{k,V}-\la) T}}{\lag \mu \phi,\phi_{k,V}\rag_{L^2}} \int_0^1 \tilde \xi(x) \phi_{k,V}(x)\dd x\right\}_{k\ge1}\in \ell^2_r.
$$
 By the asymptotic formula for the eigenvalues (e.g., see Theorem~4 in~\cite{PT-87}),     
\begin{equation}\label{1.10}
\lambda_{k,V} =k^2\pi^2+\int_0^1 V(x)\dd x + r_k, \quad \text{where  } \{r_k\}_{k\ge1}\in \ell^2.
\end{equation}Hence, 
  $\la_{k,V}-\la_{k-1,V}\to +\ty$ as $k\to +\ty$.~Applying Corollary~1 in~\cite{BL-2010}, we~find that  there is~$v\in L^2(J_T;\R)$ such that 
$$
\int_0^T e^{i\la_{k,V} s} v(s) \dd s = \frac{  ie^{i\la_{k,V} T}}{\lag \mu \phi,\phi_{k,V}\rag_{L^2}}  \int_0^1 \tilde \xi(x) \phi_{k,V}(x)\dd x, \quad k\ge1.
$$This shows that $\RR^3_T (0,\cdot)$ is surjective and completes the proof of Lemma~\ref{L:1.7}. \end{proof}

  \begin{proof}[Proof of Lemma \ref{L:1.8}]
  The proof    follows immediately from Corollary \ref{C:3.3}. Indeed, the operator $\RR_T^2(0,\cdot) $ is compact   since it can be represented as a    composition of  the compact operator~$ \Phi $ in Corollary~\ref{C:3.3} 
   with linear continuous mappings.
  \end{proof}

\section{Proof of the Main Theorem}\label{S:2}

Let us consider the NLS equation
      \begin{gather} 
 	i \p_t \psi  =-\p_{xx}^2 \psi+V(x)\psi +\kappa  |\psi|^{2p} \psi +\lag u(t),Q(x) \rag \psi, \quad x\in I, \label{2.1}\\
 	 	\psi(t,0) =\psi(t,1)=0,\label{2.2}
 	 	   \end{gather} 
 where    the field $Q$ and the potential $V$ are smooth, $p\ge1$ is an integer,
     and $\kappa $ is a       real number. 
The following proposition establishes      local   well-posedness of this equation.  It is proved in Section~\ref{S:3.2}.
 \begin{proposition} \label{P:2.1}
Assume that,   for some $T>0$, $\hat \psi_0\in H^3_{(0)}$,  and $\hat u \in L^2(J_T;\R^q)$,    there is a    solution $\hat \psi\in C(J_T;H^3_{(0)})$ of the   problem  \eqref{2.1}, \eqref{2.2} satisfying the~initial condition $\hat \psi(0)=\psi_0$.~Then there are positive  numbers
 $\delta=\delta(T,\Lambda)$  and~$C=C(T,\Lambda)$, where   
 \begin{equation}\label{2.3}
\Lambda=  \|\hat \psi\|_{C( J_T;H^3_{(0)} )}+\|\hat u\|_{L^2(J_T; \R^q)},
\end{equation}
such that the following properties hold.
\begin{enumerate}
\item[(i)] For any $\psi_0\in H^3_{(0)}$ and $u\in L^2(J_T;\R^q)$  satisfying  
\begin{equation}\label{2.4}
\|\psi_0-\hat \psi_0\|_{(3)}+ \|u-\hat u\|_{L^2(J_T;\R^q)}  <
\delta,
\end{equation}
  the problem  \eqref{2.1}, \eqref{2.2} has a unique
solution $ \psi \in C(J_T;H^3_{(0)})$ satisfying the initial condition $\psi(0)=\psi_0$.
\item[(ii)] 
Let $\Psi_T$ be the mapping  taking a couple  $(\psi_0,u)$ satisfying \eqref{2.4} to~$\psi(T)$. Then~$\Psi_T$   is $C^1$, and for any $v\in L^2(J_T;\R^q)$, we have $\partial_{u} \Psi_T(\hat\psi_0,\hat u)v=\xi(T) $, where $\xi$ is the  solution of   linearised system
       \begin{gather}
 	i \p_t \xi=-\p_{xx}^2 \xi+V(x)\psi +(p+1)\kappa  |\psi|^{2p} \xi +p \kappa \psi^2 |\psi|^{2(p-1)}\overline\xi\nonumber\\ + \lag \hat u(t),Q(x) \rag \xi+ \lag v(t),Q(x) \rag \hat\psi,\label{2.5}\\
 	 	\xi(t,0)=\xi(t,1)=0,\label{2.6}\\
 	 	\xi(0,x)=0. \label{2.7}
 	 	\end{gather} 
\end{enumerate}
\end{proposition}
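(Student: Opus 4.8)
The plan is to recast the problem \eqref{2.1},~\eqref{2.2} as a fixed-point equation through Duhamel's formula, close a contraction argument in $C(J_T;H^3_{(0)})$ with the help of the linear theory of Proposition~\ref{P:3.1}, and then read off the differentiability in~(ii) from the inverse mapping theorem applied to the same equation. The first thing I would do is absorb the control into the linear part. Writing the equation as
\[
i\p_t\psi=\bigl(-\p_{xx}^2+V+\lag u(t),Q\rag\bigr)\psi+\kappa|\psi|^{2p}\psi,
\]
I let $U_u(t,s)\colon H^3_{(0)}\to H^3_{(0)}$ be the propagator of the linear Schr\"odinger equation with the time-dependent potential $V+\lag u(t),Q\rag$ and Dirichlet boundary conditions; its well-posedness in $C(J_T;H^3_{(0)})$ and a bound $\|U_u(t,s)\|_{H^3_{(0)}\to H^3_{(0)}}\le C(T,\|u\|_{L^2(J_T;\R^q)})$ I take from the linear theory of Proposition~\ref{P:3.1}. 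A solution with $\psi(0)=\psi_0$ is then a fixed point of
\[
\Phi_{\psi_0,u}(\psi)(t)=U_u(t,0)\psi_0-i\kappa\int_0^t U_u(t,s)\,|\psi(s)|^{2p}\psi(s)\,\dd s .
\]
Keeping the control inside the propagator is what makes the argument work in $H^3_{(0)}$: the term $\lag u,Q\rag\psi$ does not in general lie in $H^3_{(0)}$, since $(\lag u,Q\rag\psi)''$ picks up the boundary value $2\lag u,Q'\rag\psi'$, which need not vanish at $x=0,1$ when $Q'(0)\ne0$; the propagator, by contrast, preserves $H^3_{(0)}$ by construction.

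Next I would establish the nonlinear estimate. As $3>1/2$, the space $H^3(I;\C)$ is a Banach algebra, so $\psi\mapsto|\psi|^{2p}\psi=\psi^{p+1}\overline{\psi}^{p}$ maps $H^3$ into itself and is locally Lipschitz, with $\bigl\||\psi|^{2p}\psi-|\tilde\psi|^{2p}\tilde\psi\bigr\|_{(3)}\le C(R)\|\psi-\tilde\psi\|_{(3)}$ whenever $\|\psi\|_{(3)},\|\tilde\psi\|_{(3)}\le R$. The boundary conditions defining $H^3_{(0)}$ are moreover preserved: in each term of $(|\psi|^{2p}\psi)''$ at least one undifferentiated factor $\psi$ or $\overline{\psi}$ survives (two of the $2p+1$ factors absorb the derivatives, and $2p+1-2\ge1$), and that factor vanishes at $x=0,1$, so $|\psi|^{2p}\psi\in H^3_{(0)}$. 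Combining this Lipschitz bound with the propagator estimate, $\Phi_{\psi_0,u}$ maps a ball of radius $1$ about $\hat\psi$ into itself and contracts it on a subinterval of length $\tau=\tau(T,\Lambda)$, once $\delta$ is small enough to keep $(\psi_0,u)$ close to $(\hat\psi_0,\hat u)$. Iterating over the $\lceil T/\tau\rceil$ subintervals produces a solution on all of $J_T$, and a Gronwall argument based on the same Lipschitz bound gives uniqueness in $C(J_T;H^3_{(0)})$. This yields~(i), with $\delta$ and $C$ depending only on $(T,\Lambda)$.

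For~(ii) I would apply the inverse mapping theorem to $G(\psi,\psi_0,u)=\psi-\Phi_{\psi_0,u}(\psi)$ at the point $(\hat\psi,\hat\psi_0,\hat u)$, where $G=0$. Since $\psi\mapsto\psi^{p+1}\overline{\psi}^{p}$ is a polynomial in $(\psi,\overline\psi)$, the map $G$ is $C^1$ (in fact $C^\infty$) between the relevant real Banach spaces. Its partial differential $\p_\psi G(\hat\psi,\hat\psi_0,\hat u)=\Id-\p_\psi\Phi$ is invertible, because solving $\p_\psi G\,[\xi]=h$ is exactly the inhomogeneous equation obtained by linearising at $\hat\psi$, which is uniquely solvable in $C(J_T;H^3_{(0)})$ by the linear theory. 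The inverse mapping theorem then gives a $C^1$ solution map $(\psi_0,u)\mapsto\psi$, and $\Psi_T$, being its composition with evaluation at $t=T$, is $C^1$. Differentiating $G=0$ in $u$ along $v$, and using $\p_\psi\bigl(\psi^{p+1}\overline{\psi}^{p}\bigr)[\xi]=(p+1)|\psi|^{2p}\xi+p\psi^2|\psi|^{2(p-1)}\overline\xi$ together with $\p_u\bigl(\lag u,Q\rag\psi\bigr)[v]=\lag v,Q\rag\psi$, reproduces exactly the linearised system~\eqref{2.5}--\eqref{2.7}, whence $\p_u\Psi_T(\hat\psi_0,\hat u)v=\xi(T)$.

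The hard part will be the nonlinear step: securing the Lipschitz estimate in the $H^3_{(0)}$-norm while verifying that the nonlinearity respects \emph{both} boundary conditions $\psi|_{0,1}=\psi''|_{0,1}=0$, and arranging the contraction so that it closes on the full prescribed interval $J_T$ rather than only for short times. The latter is where the a priori bound provided by closeness to the reference solution $\hat\psi$—which confines the iterates to a fixed ball and thereby controls the Lipschitz constant—together with the subinterval iteration, will be indispensable.
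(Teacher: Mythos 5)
Your overall architecture (Duhamel fixed point in $C(J_T;H^3_{(0)})$, then an implicit-function-theorem argument for differentiability) matches the paper's, but the step on which your entire construction rests is asserted rather than proved, and the justification you give for it is wrong. You put the control inside the propagator and claim that $U_u(t,s)$, the propagator of $-\p_{xx}^2+V+\lag u(t),Q\rag$, is bounded on $H^3_{(0)}$ ``by construction,'' citing Proposition~\ref{P:3.1}. Proposition~\ref{P:3.1} says nothing of the sort: it treats \emph{time-independent} potentials $V,W$ plus a \emph{given source} $f\in L^2(J_T;H^3\cap H^1_0(I;\C))$, not a potential $\lag u(t),Q\rag$ that is merely $L^2$ in time. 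Nor does any standard construction of propagators for time-dependent Hamiltonians (e.g.\ Kato's theory, which needs time regularity of the potential) yield boundedness on $H^3_{(0)}=\D(A^{3/2})$ for $u\in L^2(J_T;\R^q)$. Indeed, the obstruction you yourself identify --- multiplication by $\lag u,Q\rag$ destroys the boundary condition $\psi''|_{x=0,1}=0$ --- is precisely why preservation of $\D(A^{3/2})$ by $U_u$ is a nontrivial analytic fact here and not a freebie; if it held ``by construction,'' most of the well-posedness machinery of \cite{BL-2010} would be unnecessary.

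The ingredient that fills this hole --- and which your proposal never invokes --- is the smoothing property \eqref{3.3}, \eqref{3.4} (Lemma~1 in \cite{BL-2010}): the Duhamel integral $G_f(t)=\int_0^te^{-iA_V(t-s)}f(s)\,\dd s$ belongs to $C(J_T;H^3_{(0)})$ with norm controlled by $\|f\|_{L^2(J_T;H^3\cap H^1_0(I;\C))}$, even though $f(s)$ itself need not satisfy $f''|_{x=0,1}=0$. This is exactly what makes the ``source term'' treatment of $\lag u,Q\rag\psi$ work --- the treatment you rejected as unworkable. The paper's proof keeps the autonomous group $e^{-iA_Vt}$, places both $\kappa|\psi|^{2p}\psi$ and $\lag u,Q\rag\psi$ under the integral, runs the contraction and the Gronwall continuation argument with \eqref{3.4}, and then gets smoothness of the solution map cheaply because the fixed-point map is polynomial in $\psi$ and affine in $(\psi_0,u)$. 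Your variant could be repaired --- one can build $U_u$ on $H^3_{(0)}$ by exactly this fixed point with $f=\lag u,Q\rag\psi$ --- but then the propagator route is a detour through the same lemma, and it creates a further unaddressed problem in part~(ii): in your formulation $u$ enters $G(\psi,\psi_0,u)$ only through $U_u$, so your $C^1$ claim requires differentiability of $u\mapsto U_u$ in operator norm, which you never discuss (in the paper's formulation the $u$-dependence is bilinear, so this issue does not arise). The parts of your argument concerning the nonlinearity --- that $H^3$ is an algebra and that $|\psi|^{2p}\psi$ respects both boundary conditions of $H^3_{(0)}$ --- are correct, but they were never the bottleneck.
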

The following theorem is a generalisation of the Main Theorem.
\begin{theorem}\label{T:2.2}
Assume that the field $Q$ is saturating in the sense of Definition~\ref{D:1.2}  with potentials $V(x)$ and~$W(x)=2p\kappa    \phi^{2p}(x)$, and  the vector space~$\Q$ contains the functions  $\mek, \phi^{2p}$,  and a function~$\mu$
 verifying     inequality \eqref{1.9}  for some   $c>0$.~Then, for any $T>0$,    there is a number $\de>0$ such that, for~any~$\psi_0, \psi_1 \in H^3_{(0)}\cap \sS $ with 
\begin{equation}\label{EERW}
 \|\psi_j-\phi_1\|_{(3)}<\de, \quad j=0,1, 
\end{equation}
 there is a control $u\in L^2([0,T];\R^q)$  and a   solution $\psi\in C([0,T];H^3_{(0)})$  of   the problem~\eqref{2.1}, \eqref{2.2}    satisfying    $\psi(0)=\psi_0$ and~$\psi(T)= \psi_1$.\end{theorem}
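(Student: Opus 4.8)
The plan is to transfer the exact controllability of the linearised equation (Proposition~\ref{P:1.6}) to the nonlinear problem~\eqref{2.1}, \eqref{2.2} by applying the inverse mapping theorem to the flow map $\Psi_T$. Write $\phi=\phi_{1,V}$ for the ground state of $A_V$ (this is the state denoted $\phi_1$ in~\eqref{EERW}) and $\la=\la_{1,V}$. The first step is to construct a convenient reference trajectory. Since $\mek,\phi^{2p}\in\Q$, I can pick a \emph{time-independent} control $\hat u\in\R^q$ such that $\lag\hat u,Q\rag=\mu_0-\la-\kappa\phi^{2p}$, where $\mu_0$ is any constant of the form $\mu_0=2\pi m/T$ with $m\in\Z$. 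A direct computation (using $|\phi|^{2p}=\phi^{2p}$ and $A_V\phi=\la\phi$) shows that $\hat\psi(t)=e^{-i\mu_0 t}\phi$ then solves~\eqref{2.1}, \eqref{2.2} with this $\hat u$. Moreover $\phi\in H^3_{(0)}$: from $-\phi''+V\phi=\la\phi$ and $\phi(0)=\phi(1)=0$ we get $\phi''(0)=\phi''(1)=0$, so $\hat\psi\in C(J_T;H^3_{(0)})$ and Proposition~\ref{P:2.1} applies at $(\hat\psi_0,\hat u)=(\phi,\hat u)$. The choice $\mu_0T\in2\pi\Z$ gives $e^{-i\mu_0T}=1$, whence $\hat\psi(0)=\hat\psi(T)=\phi$; this phase normalisation is what anchors both endpoints of the reference at the ground state.

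Next I would identify the $u$-derivative of the flow at $(\phi,\hat u)$. By Proposition~\ref{P:2.1} the map $\Psi_T$ is $C^1$ near $(\phi,\hat u)$ and $\p_u\Psi_T(\phi,\hat u)v=\xi(T)$, where $\xi$ solves~\eqref{2.5}--\eqref{2.7}. Substituting $\xi=e^{-i\mu_0 t}\eta$ cancels the time-dependent phase: using $\hat\psi^2|\hat\psi|^{2(p-1)}=e^{-2i\mu_0 t}\phi^{2p}$ together with the identity $\lag\hat u,Q\rag=\mu_0-\la-\kappa\phi^{2p}$, one checks that $\eta$ solves exactly~\eqref{1.1}, \eqref{1.2} with potentials $V$ and $W=2p\kappa\phi^{2p}$ and with $\la=\la_{1,V}$. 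Since $e^{-i\mu_0T}=1$, this yields $\p_u\Psi_T(\phi,\hat u)=\RR_T(0,\cdot)$. The hypotheses of Proposition~\ref{P:1.6} hold: $Q$ is saturating by assumption; the boundary conditions~\eqref{1.8} are satisfied because $W=2p\kappa\phi^{2p}$ and $W'=4p^2\kappa\phi^{2p-1}\phi'$ vanish at $x=0,1$ (as $\phi$ does and $2p\ge2$); and $\mu\in\Q$ satisfies~\eqref{1.9}. Hence $\p_u\Psi_T(\phi,\hat u)$ is surjective onto $H^3_{(0)}\cap\TT_{\phi}$, which is precisely the tangent space to $\sS\cap H^3_{(0)}$ at $\phi$. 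Being a surjective continuous linear map out of a Hilbert space, it admits a bounded right inverse.

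To handle both endpoints at once, I would package them into a single map and invoke the surjective form of the inverse mapping theorem. Define
$$
\widetilde\Psi:(\sS\cap H^3_{(0)})\times L^2(J_T;\R^q)\to(\sS\cap H^3_{(0)})^2,\qquad (\psi_0,u)\mapsto\bigl(\psi_0,\Psi_T(\psi_0,u)\bigr),
$$
so that $\widetilde\Psi(\phi,\hat u)=(\phi,\phi)$. Its differential at $(\phi,\hat u)$ sends $(\eta_0,v)$ to $\bigl(\eta_0,\ \p_{\psi_0}\Psi_T(\phi,\hat u)\eta_0+\RR_T(0,\cdot)v\bigr)$. Because the flow preserves the $L^2$-sphere $\sS$, the operator $\p_{\psi_0}\Psi_T(\phi,\hat u)$ maps $H^3_{(0)}\cap\TT_{\phi}$ into itself, while $\RR_T(0,\cdot)$ surjects onto all of $H^3_{(0)}\cap\TT_{\phi}$; given any target tangent pair $(a,b)$ one sets $\eta_0=a$ and solves $\RR_T(0,\cdot)v=b-\p_{\psi_0}\Psi_T(\phi,\hat u)a$. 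Thus $d\widetilde\Psi(\phi,\hat u)$ is onto the tangent space of $(\sS\cap H^3_{(0)})^2$ at $(\phi,\phi)$ and has a bounded right inverse. Working in local charts of the smooth submanifold $\sS\cap H^3_{(0)}$, the inverse mapping theorem then makes $\widetilde\Psi$ locally surjective near $(\phi,\phi)$, producing $\de>0$ such that every pair $\psi_0,\psi_1$ with $\|\psi_j-\phi\|_{(3)}<\de$ lies in its image. Reading off the two coordinates gives $u\in L^2(J_T;\R^q)$ with $\Psi_T(\psi_0,u)=\psi_1$, and Proposition~\ref{P:2.1}(i) guarantees the corresponding solution $\psi\in C(J_T;H^3_{(0)})$, which is the assertion of the theorem.

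The substantive analytic work has already been carried out in Proposition~\ref{P:1.6}, so the delicate points remaining here are bookkeeping of a structural nature: the phase normalisation $\mu_0T\in2\pi\Z$ that forces the reference to return to $\phi$ at time $T$; the verification that the linearised derivative equals \emph{exactly} $\RR_T(0,\cdot)$ after the substitution $\xi=e^{-i\mu_0t}\eta$; and the honest application of the inverse mapping theorem on the sphere, which requires both the bounded right inverse and the use of charts so that surjectivity of the differential genuinely yields local surjectivity of $\widetilde\Psi$. I expect the last of these to be the main obstacle, since it is where the sphere constraint and the manifold structure must be reconciled with the linear surjectivity statement.
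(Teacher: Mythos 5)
Your proposal is correct, and its skeleton coincides with the paper's: a stationary reference trajectory at the ground state produced by a constant control (using $\mek,\phi^{2p}\in\Q$), identification of the $u$-derivative of $\Psi_T$ with the resolving operator $\RR_T(0,\cdot)$ of the linear problem \eqref{1.1}, \eqref{1.2}, surjectivity of that operator via Proposition~\ref{P:1.6}, and then the inverse mapping theorem. Where you genuinely diverge is in how the \emph{variable initial condition} is handled. The paper applies the inverse mapping theorem only to the map $u\mapsto\Psi_T(\phi_1,u)$ with the initial state frozen at $\phi_1$, obtaining controls $u_0,u_1$ with $\Psi_T(\phi_1,u_0)=\overline{\psi_0}$ and $\Psi_T(\phi_1,u_1)=\psi_1$; it then invokes time-reversibility (the coefficients being real, if $\psi$ solves the equation with control $u$, then $\overline{\psi(T-\cdot)}$ solves it with control $u(T-\cdot)$) to convert the first identity into $\Psi_T(\psi_0,w)=\phi_1$ with $w(t)=u_0(T-t)$, and concatenates, reaching $\psi_1$ from $\psi_0$ in time $2T$ --- which suffices since $T>0$ is arbitrary. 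You instead augment the map to $(\psi_0,u)\mapsto(\psi_0,\Psi_T(\psi_0,u))$ and apply a submersion-type inverse mapping theorem on the sphere, reaching $\psi_1$ from $\psi_0$ directly in time $T$. Your route needs more bookkeeping: joint $C^1$ dependence on $(\psi_0,u)$ (available from Proposition~\ref{P:2.1}(ii)), the fact that $\p_{\psi_0}\Psi_T(\phi_1,\hat u)$ maps $H^3_{(0)}\cap\TT_{\phi}$ into itself (which you correctly deduce from conservation of the $L^2$-norm), and charts on $\sS\cap H^3_{(0)}$ --- though the paper implicitly needs a chart/submersion argument too, since its derivative is surjective only onto the tangent space $H^3_{(0)}\cap\TT_{\phi}$ rather than onto $H^3_{(0)}$. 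What your route buys: no reliance on time-reversal symmetry (so it would survive perturbations destroying it) and controllability in the stated time $T$ without the concatenation step. What the paper's route buys: the inverse mapping theorem is applied on a flat Banach space of controls only, so the manifold structure enters solely through the target. Two minor remarks: your phase parameter $\mu_0$ is superfluous generality --- taking $m=0$ gives $\mu_0=0$ and recovers exactly the paper's reference control \eqref{2.8}; and your explicit verification of the boundary conditions \eqref{1.8} for $W=2p\kappa\phi^{2p}$ fills in a point the paper leaves implicit.
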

 \begin{proof}
 	From the assumption that the functions $\mek$ and $\phi^{2p}$  belong to $\Q$ it follows that $\hat \psi(t)=\phi$ is a stationary  solution of the problem \eqref{2.1}, \eqref{2.2} corresponding to some constant control  $\hat u(t) $ such that 
 	\begin{equation}\label{2.8}
 		\lag \hat u(t),Q(x) \rag=-\kappa  \phi(x)^{2p}-\la_{1,V}.
 	\end{equation}
   Proposition \ref{P:2.1} implies that the operator $\Psi_T:(\psi_0,u) \mapsto \psi(T)$  is well-defined and $C^1$-smooth in some neighbourhood of $(\phi, \hat u)$.~Furthermore, for     any $v\in L^2(J_T;\R^q)$, we have
    $\p_u\Psi_T(\phi,\hat u)v=\xi(T)$, where, in view of \eqref{2.8}, $\xi$ is the  solution of   equation
\begin{equation}\label{2.9}
 	i \p_t \xi=-\p_{xx}^2 \xi+V(x)\xi-\la_{1,V}\xi +2p\kappa    \phi^{2p}\Re(\xi)   + \lag v(t),Q(x) \rag \phi.
\end{equation}The conditions of Proposition~\ref{P:1.6} are satisfied for the  linearised system \eqref{2.9}, \eqref{2.6}, \eqref{2.7}, so it is exactly controllable.~Applying the  inverse mapping theorem to the mapping $u\mapsto \Psi_T(\phi,u)$, we find that  there is a number $\de>0$ such~that, for  any~$ \psi_0, \psi_1 \in H^3_{(0)}\cap \sS $ verifying  \eqref{EERW},  
 there are   controls $u_0,u_1\in L^2([0,T];\R^q)$ such that 
 $\Psi_T(\phi_1,u_0)=\overline{\psi_0}$ and~$\Psi_T(\phi_1,u)=\psi_1$.~By the time-reversibility pro\-perty of the Schr\"odinger   equation, $\Psi_T(\psi_0, w)=\phi$ with~$w(t)=u_0(T-t)$. Setting~$u(t)=w(t)$ for $t\in [0,T]$ and $u(t)=u_1(t-T)$ for $t\in [T,2T]$, we derive~$\Psi_{2T}(\psi_0,u)=\psi_1$.~As the time $T>0$ is arbitrary, we complete the proof of Theorem~\ref{T:2.2}.
  \end{proof}

\begin{proof}[Proof of the Main Theorem.]   Main Theorem is proved by  applying Theorem \ref{T:2.2} with $p=1$ and $V=0$. Indeed, the assumption that
 $\Q$ contains the functions~$\mek$ and   $\cos(2\pi x)$ implies that $\phi_1^2$ is also in $\Q$. Proposition \ref{P:1.5}
 implies the saturation assumption with      $W(x) =  2 \kappa  \phi_1^2(x)$ and  $\kappa\in \R \setminus \K$. 
 
 When\footnote{Note that zero belongs to the set  $\K$ in Lemma \ref{L:1.4}, since $\la_{1,0}=0$.} $\kappa=0$, the linearised problem is still exactly controllable by Lemma~\ref{L:1.7}, so the conclusions of the theorem hold in that case too. Thus we proved the Main Theorem with   $\KK=\K\setminus\{0\}\subset (-\ty,0)$.
 \end{proof}
 \begin{remark}\label{R:2.3}
	Let us emphasise that, under the conditions of the Main Theorem, the  linearised    problem  \eqref{1.1}, \eqref{1.2}  with  $V(x) = 0$,   $W(x) =  2 \kappa  \phi_1^2(x)$,       and $\kappa\in   \KK$ may  still be exactly controllable.~Indeed,  in that situation we only know that the linearised system can   miss at most one direction (see Corollary~\ref{C:3.4}). As~mentioned in the Introduction, even if the linearised  system misses one direction, the nonlinear system can still be   locally exactly controllable near the ground state.
	\end{remark}
\begin{remark}\label{R:2.4}
It is not difficult to construct examples of saturating fields in the case of any integer~$p\ge1$. Indeed, 
the results of Lemma~\ref{L:1.4} and Proposition~\ref{P:1.5}   generalise without difficulties     to the case when $V(x)=0$, $W(x)=2p\kappa    \phi^{2p}$, and~$\mek, \cos (\pi x), \ldots, \cos (N_p\pi x)\in\Q$ with sufficiently large integer $N_p\ge1$.

\end{remark}

  \section{Appendix}

\subsection{Well-posedness   of the  linear Schr\"odinger   equation}\label{S:3.1} 
Let us consider 
  the following linear Schr\"odinger   equation with a source term:  
\begin{gather}  
 	i \p_t \xi=-\p_{xx}^2 \xi+V(x)\xi + W(x) \Re(\xi)   +   f(t,x), \label{3.1}\\
 	 	\xi(t,0)=\xi(t,1)=0. \label{3.2} 
 	 	\end{gather} 
 	 	Well-posedness of this equation is essentially established in \cite{BL-2010}.
 \begin{proposition}\label{P:3.1}
	For any   $T>0$,  $V,W\in H^3(I;\R)$,       $f  \in L^2 (J_T; H^3\cap H^1_0(I;\C))  $, and $\xi_0\in H^3_{(0)},$  there is a unique   solution  $\xi \in C(J_T;H^3_{(0)} )$  of this   problem   with the initial condition     $\xi(0)=\xi_0$ in the sense that  
	$$
	  \xi(t)=e^{-iA_V t} \xi_0-i\int_0^te^{-iA_V(t-s)} \left( W \Re(\xi(s))   + f(s)\right) \dd s, \quad t\in J_T.
$$   Moreover,  there is a constant~$C_T>0$ such that 
$$
\|\xi\|_{C(J_T;H^3_{(0)} )}\le C_T \left (\|\xi_0\|_{(3)}+\|f\|_{L^2(J_T;H^3\cap H^1_0(I;\C))}\right).
$$
\end{proposition}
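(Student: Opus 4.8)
The plan is to solve the Duhamel (integral) formulation by a contraction-mapping argument, treating the zeroth-order term $W\Re(\xi)$ as a perturbation of the free dynamics and importing from \cite{BL-2010} the smoothing estimate for the inhomogeneous Schr\"odinger group; everything else reduces to the Banach-algebra property of $H^3$ in one dimension and a Gr\"onwall-type iteration.

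First I would record the properties of the free dynamics. The operator $A_V$ is self-adjoint and bounded below on $L^2$, so $e^{-iA_V t}$ is a unitary group that restricts to a unitary group on each space $H^s_{(V)}=\D(A_V^{s/2})$; since $H^3_{(V)}=H^3_{(0)}$, the map $t\mapsto e^{-iA_V t}\xi_0$ lies in $C(J_T;H^3_{(0)})$ and satisfies $\|e^{-iA_V t}\xi_0\|_{(3)}=\|\xi_0\|_{(3)}$. The heart of the matter is the inhomogeneous estimate, essentially established in \cite{BL-2010}: for every $g\in L^2(J_T;H^3\cap H^1_0(I;\C))$ the function $t\mapsto \int_0^t e^{-iA_V(t-s)}g(s)\,\dd s$ belongs to $C(J_T;H^3_{(0)})$ and
$$
\Big\|\int_0^{\cdot} e^{-iA_V(\cdot-s)}g(s)\,\dd s\Big\|_{C(J_T;H^3_{(0)})}\le C_T\,\|g\|_{L^2(J_T;H^3\cap H^1_0(I;\C))}. \qquad (\star)
$$
I would take $(\star)$ as the main technical input, and I expect it to be the principal obstacle: the inhomogeneity $g(s)$ need not lie in $\D(A_V^{3/2})$ (its second derivative need not vanish at the endpoints), so one cannot simply commute $A_V^{3/2}$ through the integral, and the gain of three derivatives must be extracted from the oscillation of the propagator together with the boundary structure of $H^3\cap H^1_0$.

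Next I would set up the fixed point by defining, on $C(J_T;H^3_{(0)})$, the affine map
$$
\Gamma(\xi)(t)=e^{-iA_V t}\xi_0-i\int_0^t e^{-iA_V(t-s)}\big(W\Re(\xi(s))+f(s)\big)\,\dd s.
$$
To see that $\Gamma$ is well defined I would check that $W\Re(\xi(s))\in H^3\cap H^1_0(I;\C)$ whenever $\xi(s)\in H^3_{(0)}$: the space $H^3(I)$ is a Banach algebra in dimension one, so, using the equivalence of $\|\cdot\|_{H^3}$ and $\|\cdot\|_{(3)}$ on $H^3_{(0)}$, one gets $\|W\Re(\xi)\|_{H^3}\le C\|W\|_{H^3}\|\xi\|_{(3)}$; moreover $W\Re(\xi)$ vanishes at $x=0,1$ because $\xi$ does, so it indeed lies in $H^3\cap H^1_0$. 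Together with the hypothesis $f\in L^2(J_T;H^3\cap H^1_0)$, this places $g(s)=W\Re(\xi(s))+f(s)$ in the space required by $(\star)$, so $\Gamma$ maps $C(J_T;H^3_{(0)})$ into itself.

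Finally, the contraction estimate. For $\xi,\eta\in C([0,\tau];H^3_{(0)})$ with $\tau\le T$, applying $(\star)$ on $[0,\tau]$ (with a constant bounded by $C_T$) together with the algebra bound yields
$$
\|\Gamma(\xi)-\Gamma(\eta)\|_{C([0,\tau];H^3_{(0)})}\le C_T\|W\Re(\xi-\eta)\|_{L^2([0,\tau];H^3\cap H^1_0)}\le C_T\,C\,\|W\|_{H^3}\sqrt{\tau}\;\|\xi-\eta\|_{C([0,\tau];H^3_{(0)})},
$$
so $\Gamma$ is a contraction on $C([0,\tau];H^3_{(0)})$ once $\tau$ is small enough (depending only on $T$ and $\|W\|_{H^3}$) that $C_T C\|W\|_{H^3}\sqrt{\tau}<1$. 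The Banach fixed-point theorem then gives a unique solution on $[0,\tau]$; since $\tau$ is independent of the initial datum, I would iterate on $[\tau,2\tau],[2\tau,3\tau],\dots$ to cover $[0,T]$ and obtain the unique $\xi\in C(J_T;H^3_{(0)})$. The quantitative bound $\|\xi\|_{C(J_T;H^3_{(0)})}\le C_T(\|\xi_0\|_{(3)}+\|f\|_{L^2(J_T;H^3\cap H^1_0)})$ then follows by combining the free-propagator identity, the estimate $(\star)$, and a Gr\"onwall argument over the finitely many subintervals.
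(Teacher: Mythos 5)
Your proposal is correct and takes essentially the same route as the paper: both rest on the smoothing estimate for the inhomogeneous propagator from Lemma~1 in \cite{BL-2010} (the paper's \eqref{3.3}--\eqref{3.4}, your $(\star)$) and then apply a contraction-mapping argument to the same Duhamel map. The paper explicitly omits the fixed-point details, and what you supply (the $H^3$ algebra property, the vanishing of $W\Re(\xi)$ at the endpoints, contraction on short subintervals, and the Gr\"onwall iteration) is a correct elaboration of exactly that argument.
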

 By Lemma~1 in \cite{BL-2010}, the function
\begin{equation}\label{3.3}
G_f:t\mapsto \int_0^te^{-iA_V(t-s)} f(s)  \dd s \quad \textup{belongs to $C(J_T;H^3_{(0)} )$}
\end{equation}  	 
 and satisfies the  inequality\,\footnote{More precisely,   in the paper \cite{BL-2010}, the case $V=0$ is considered. The general case is proved in a similar way by using the asymptotics \eqref{1.10} for the eigenvalues of $A_V$.}  
\begin{equation}\label{3.4}
\|G_f\|_{C(J_T;H^3_{(0)} )}\le C_T \|f\|_{L^2(J_T;H^3\cap H^1_0(I;\C))}.
\end{equation}  Proposition \ref{P:3.1} is derived from this  by applying  a usual fixed point  argument	to the mapping $L: C(J_T;H^3_{(0)} ) \to C(J_T;H^3_{(0)} )$ defined by
\begin{align*}
	L(\xi)(t)=e^{-iA_V t} \xi_0-i\int_0^te^{-iA_V(t-s)} \left( W \Re(\xi(s))   + f(s)\right) &\dd s,  \\  \textup{for } \xi \in &C(J_T;H^3_{(0)}), \,\,\,   t\in J_T.
\end{align*} 
We shall not dwell on the details of the proof.

 The next lemma  follows from  the Arzel\`a--Ascoli theorem in a standard way; again we skip the details.
\begin{lemma}\label{L:3.2}Under the conditions of Proposition \ref{P:3.1},
	the mapping 
	$$
	G_\cdot:  L^2(J_T;H^3_{(0)})\to  C(J_T;H^3_{(0)} ),\quad  f\mapsto G_f
	$$ is compact.
\end{lemma}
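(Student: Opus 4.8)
The plan is to apply the Arzel\`a--Ascoli theorem for functions taking values in the Banach space $H^3_{(0)}$. Setting $\mathcal{B}=\{f\in L^2(J_T;H^3_{(0)}):\|f\|_{L^2(J_T;H^3_{(0)})}\le 1\}$, it suffices to show that the image $G_\cdot(\mathcal{B})\subset C(J_T;H^3_{(0)})$ is relatively compact, which by Arzel\`a--Ascoli reduces to two properties: (i) the family $\{G_f:f\in\mathcal{B}\}$ is equicontinuous in $t$ with values in $H^3_{(0)}$; and (ii) for each fixed $t\in J_T$, the set $\{G_f(t):f\in\mathcal{B}\}$ is relatively compact in $H^3_{(0)}$. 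Throughout I would work in the eigenbasis $\{\phi_{k,V}\}$ of $A_V$, writing $f(s)=\sum_k f_k(s)\phi_{k,V}$ and
\[
G_f(t)=\sum_{k\ge1}\left(\int_0^t e^{-i\lambda_{k,V}(t-s)}f_k(s)\,\dd s\right)\phi_{k,V},
\]
so that $\|G_f(t)\|_{(3)}$ is comparable to $(\sum_k\lambda_{k,V}^3|\int_0^t e^{-i\lambda_{k,V}(t-s)}f_k(s)\,\dd s|^2)^{1/2}$; the boundedness of $G_\cdot$ is already recorded in \eqref{3.4}.

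For the equicontinuity (i), I would use the Duhamel splitting, for $t'<t$,
\[
G_f(t)-G_f(t')=\int_{t'}^t e^{-iA_V(t-s)}f(s)\,\dd s+\left(e^{-iA_V(t-t')}-\Id\right)G_f(t').
\]
The first term is bounded in $H^3_{(0)}$ by $|t-t'|^{1/2}\|f\|_{L^2(J_T;H^3_{(0)})}$, hence is uniformly small. For the second term I would split into low and high modes via the spectral projection $P_N$ onto $\lspan\{\phi_{1,V},\dots,\phi_{N,V}\}$: on $P_N$ the group difference is controlled by $|e^{-i\lambda_{k,V}(t-t')}-1|\le\lambda_{k,V}|t-t'|$, while on $\Id-P_N$ it is dominated by the high-frequency tail of $\|G_f(t')\|_{(3)}$, which is exactly the quantity treated in the next step.

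The pointwise relative compactness (ii) is where the real work lies. For fixed $t$ I would again split $G_f(t)=P_N G_f(t)+(\Id-P_N)G_f(t)$. The truncated part $P_N G_f(t)$ ranges in a fixed finite-dimensional subspace and is bounded, hence lies in a relatively compact set; so everything reduces to showing that the tail
\[
\left\|(\Id-P_N)G_f(t)\right\|_{(3)}^2=\sum_{k>N}\lambda_{k,V}^3\left|\int_0^t e^{-i\lambda_{k,V}(t-s)}f_k(s)\,\dd s\right|^2
\]
tends to $0$ as $N\to\infty$, uniformly over $f\in\mathcal{B}$; the same uniform bound also closes the high-mode estimate in Step~2.

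This uniform tail estimate at the top regularity is the step I expect to be the main obstacle, and it is genuinely delicate: the crude Cauchy--Schwarz bound $|\int_0^t e^{-i\lambda_{k,V}(t-s)}f_k(s)\,\dd s|^2\le t\int_0^T|f_k|^2\,\dd s$ only returns $t\sum_{k>N}\lambda_{k,V}^3\|f_k\|_{L^2(J_T)}^2$, i.e.\ the tail of $\|f\|_{L^2(J_T;H^3_{(0)})}^2$, which does not decay uniformly on $\mathcal{B}$. To extract a genuine decay factor I would integrate by parts in $s$, using $e^{-i\lambda_{k,V}(t-s)}=(i\lambda_{k,V})^{-1}\partial_s e^{-i\lambda_{k,V}(t-s)}$ on the high modes (where $\lambda_{k,V}^{-1}\le\lambda_{N+1,V}^{-1}$ is small), so as to trade the oscillation of the Schr\"odinger group for an inverse power of $\lambda_{k,V}$. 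Harvesting this oscillation so that the resulting estimate is uniform over the unit ball is the crux of the argument; once the tail is shown to vanish uniformly, both (i) and (ii) follow and Arzel\`a--Ascoli yields the compactness of $G_\cdot$ into $C(J_T;H^3_{(0)})$.
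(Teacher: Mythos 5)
You have correctly identified where the difficulty sits, but the step you flag as the crux --- uniform decay of the high-mode tail $\|(\Id-P_N)G_f(t)\|_{(3)}$ over the unit ball --- is not merely delicate: it is false, and with it both of your Arzel\`a--Ascoli hypotheses fail in the $H^3_{(0)}$-topology. Take $g_n=\la_{n,V}^{-3/2}\phi_{n,V}$, so that $\{g_n\}$ is orthonormal in $H^3_{(V)}$ (whose norm is equivalent to $\|\cdot\|_{(3)}$ on $H^3_{(V)}=H^3_{(0)}$), and set $f_n(s)=e^{-iA_V s}g_n$. Then $\|f_n\|_{L^2(J_T;H^3_{(0)})}\le C\sqrt{T}$ uniformly, while
$$
G_{f_n}(t)=\int_0^t e^{-iA_V(t-s)}e^{-iA_V s}g_n\,\dd s=t\,e^{-iA_V t}g_n,
$$
so that $\|G_{f_n}(T)-G_{f_m}(T)\|_{(3)}\ge c\,T$ for all $n\neq m$ (unitarity of the group on $H^3_{(V)}$ plus norm equivalence). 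Hence the image of a bounded set contains an infinite uniformly separated family at $t=T$, the pointwise relative compactness (ii) fails, and $\|(\Id-P_N)G_{f_n}(T)\|_{(3)}\ge cT$ for every $n>N$, so no uniform tail estimate can hold; choosing $t-t'=\pi/\la_{n,V}$ shows the family is not $H^3_{(0)}$-equicontinuous either, so (i) fails as well. Your proposed rescue by integrating by parts in $s$ cannot close this for two reasons: $f$ is only $L^2$ in time, so $\p_s f_k$ is not available; and precisely for the resonant inputs above one has (formally) $\p_s f_n=-i\la_{n,V}f_n$, so the derivative returns exactly the factor $\la_{n,V}$ that the gain $\la_{n,V}^{-1}$ removes. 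The obstruction is the secular resonance of the Duhamel integral with free solutions of the same frequency, and no rearrangement of estimates eliminates it.

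For comparison, the paper gives no actual proof here --- it only asserts that the lemma ``follows from the Arzel\`a--Ascoli theorem in a standard way'' --- and what the standard argument genuinely yields is compactness after strictly lowering the target regularity: since $\|(e^{-iA_V\tau}-\Id)h\|_{(s)}\lesssim \tau^{(3-s)/2}\|h\|_{(3)}$ and bounded sets of $H^3_{(0)}$ are relatively compact in $H^s_{(0)}$ for $s<3$, the family $\{G_f\}$ is bounded in $C(J_T;H^3_{(0)})$, equicontinuous and pointwise relatively compact with values in $H^s_{(0)}$, whence $G_\cdot$ is compact from $L^2(J_T;H^3_{(0)})$ into $C(J_T;H^s_{(0)})$ for every $s<3$ --- but not, by the example above, at the endpoint $s=3$ claimed in the statement. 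So your instinct that everything hinges on the uniform tail estimate was exactly right; the honest conclusion is that the estimate fails, the lemma as stated cannot be proved by your route or any other, and it can only be salvaged by weakening the codomain or by restricting the class of sources (for instance, $f(s)=v(s)h$ with scalar $v\in L^2(J_T;\R)$ and a fixed profile $h\in H^3_{(0)}$ does give a compact map, since then the tail is controlled by $T\|v\|_{L^2}^2\sum_{k>N}\la_{k,V}^3|h_k|^2\to 0$). This gap propagates to Corollary~\ref{C:3.3} and Lemma~\ref{L:1.8} as they are used in the paper, so it is worth flagging rather than attempting to complete the proposal as written.
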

Let us consider a particular case of the problem \eqref{3.1}, \eqref{3.2} given by 
 \begin{gather}  
 	i \p_t \xi=-\p_{xx}^2 \xi+ V(x)\xi +W(x) \Re(\xi+\eta), \label{3.5} \\
 	 	\xi(t,0)=\xi(t,1)=0,\label{3.6}   \\
 	 	\xi(0,x)=0,\label{3.7} 
 	 	\end{gather} and assume that $W$ satisfies the boundary conditions  \eqref{1.8}.
 	 	 Then
  the mapping $\eta\mapsto W(x) \Re(\eta)$ is continuous from $L^2(J_T;H_{(0)}^3)$ to itself, and the following is an immediate consequence of Lemma \ref{L:3.2}.
 \begin{corollary}\label{C:3.3}Under the conditions of Proposition \ref{P:3.1},
  the mapping  
$$
\Phi:L^2(J_T;H_{(0)}^3)\to C(J_T;H^3_{(0)} ), \quad \eta\mapsto \xi
$$is compact, where $\xi$ is the solution of the problem \eqref{3.5}-\eqref{3.7}.
 	\end{corollary}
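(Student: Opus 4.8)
The plan is to recognise $\Phi$ as the source-to-solution map of Proposition~\ref{P:3.1} precomposed with the multiplication $\eta\mapsto W\Re(\eta)$, and then to extract compactness from the single smoothing operator $G_\cdot$, whose compactness is granted by Lemma~\ref{L:3.2}. Concretely, fix $\eta\in L^2(J_T;H^3_{(0)})$ and set $f:=W\Re(\eta)$. Since $W$ satisfies the boundary conditions~\eqref{1.8}, the multiplication $M_W\colon \eta\mapsto W\Re(\eta)$ is continuous from $L^2(J_T;H^3_{(0)})$ into itself, so $f\in L^2(J_T;H^3_{(0)})\subset L^2(J_T;H^3\cap H^1_0(I;\C))$ and $\xi=\Phi(\eta)$ is exactly the solution furnished by Proposition~\ref{P:3.1} with this source and with $\xi_0=0$. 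Using the linearity of $G_\cdot$ in its argument, the integral representation of that proposition then reads
$$
\xi=-iG_{W\Re(\xi)}-iG_{W\Re(\eta)},
$$
where $G_\cdot$ is the operator from~\eqref{3.3}.

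First I would record the qualitative a~priori bound: by Proposition~\ref{P:3.1}, $\|\Phi(\eta)\|_{C(J_T;H^3_{(0)})}\le C_T\|W\Re(\eta)\|_{L^2(J_T;H^3_{(0)})}\le C_T'\|\eta\|_{L^2(J_T;H^3_{(0)})}$, so $\Phi$ is a bounded linear map. To upgrade boundedness to compactness I would argue sequentially. Let $\{\eta_n\}$ be bounded in $L^2(J_T;H^3_{(0)})$ and put $\xi_n:=\Phi(\eta_n)$. By the bound just displayed, $\{\xi_n\}$ is bounded in $C(J_T;H^3_{(0)})$, hence in $L^2(J_T;H^3_{(0)})$; therefore, by continuity of $M_W$, both sequences $\{W\Re(\xi_n)\}$ and $\{W\Re(\eta_n)\}$ are bounded in $L^2(J_T;H^3_{(0)})$. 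Applying Lemma~\ref{L:3.2} to each and passing to a common subsequence, I may assume that $G_{W\Re(\xi_n)}$ and $G_{W\Re(\eta_n)}$ converge in $C(J_T;H^3_{(0)})$. The Duhamel identity above then shows that $\xi_n=-iG_{W\Re(\xi_n)}-iG_{W\Re(\eta_n)}$ converges in $C(J_T;H^3_{(0)})$ along this subsequence, which is precisely compactness of $\Phi$.

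The one genuinely non-formal point is the self-consistent term $W\Re(\xi)$: because the source fed into $G_\cdot$ contains $\xi$ itself, $\Phi$ is not literally a composition of $G_\cdot$ with continuous maps, so one cannot conclude compactness by invoking Lemma~\ref{L:3.2} alone. The sequential argument circumvents this by first using the well-posedness bound of Proposition~\ref{P:3.1} to control $\{\xi_n\}$, and only then feeding the bounded sequence $\{W\Re(\xi_n)\}$ into the compact operator $G_\cdot$. An equivalent, more structural way to phrase it is to introduce $L\colon C(J_T;H^3_{(0)})\to C(J_T;H^3_{(0)})$, $L\zeta=-iG_{W\Re(\zeta)}$, which is compact by Lemma~\ref{L:3.2}; uniqueness in Proposition~\ref{P:3.1} gives $\Ker(I-L)=\{0\}$, so $I-L$ is invertible by the Fredholm alternative, and $\Phi=(I-L)^{-1}\circ(-iG_\cdot)\circ M_W$ is compact since $(I-L)^{-1}$ and $M_W$ are continuous while $G_\cdot$ is compact.
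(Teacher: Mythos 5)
Your proof is correct and follows essentially the same route as the paper, which simply declares the corollary an ``immediate consequence'' of Lemma~\ref{L:3.2} together with the continuity of $\eta\mapsto W\Re(\eta)$ on $L^2(J_T;H^3_{(0)})$ under~\eqref{1.8}. What you add is the explicit justification of that immediacy: the Duhamel identity $\xi=-iG_{W\Re(\xi)}-iG_{W\Re(\eta)}$ combined with the a~priori bound of Proposition~\ref{P:3.1} (equivalently, your factorisation $\Phi=(-iG_\cdot)\circ M_W\circ(I+\Phi)$ or the Fredholm variant), which is exactly the mechanism the paper leaves implicit.
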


\subsection{Local well-posedness of the  NLS   equation}\label{S:3.2} 

Here we prove Proposition \ref{P:2.1}.

 {\it Step~1.~Uniqueness.}~Let $\psi_1,\psi_2 \in C(J_T;H^3_{(0)} )$ be    solutions of~\eqref{2.1}, \eqref{2.2} with the same control $u\in L^2(J_T;\R^q)$ and the same initial condition~$\psi_1(0)=\psi_2(0)$. Then the difference  $\varphi= \psi_1-\psi_2$ satisfies  
    \begin{gather*} 
 	i \p_t \varphi  =-\p_{xx}^2 \varphi+V(x)\varphi +\kappa  |\psi_1|^{2p} \psi_1 - \kappa  |\psi_2|^{2p} \psi_2   +\lag u(t),Q(x) \rag \varphi,    \\
 	 	\varphi(t,0) =\varphi(t,1)=0, \\ 	 	 
 	 		 \varphi(0)=0.  
 	 	   \end{gather*} 
 From \eqref{3.4} it follows that
\begin{align*}
 \|\varphi(t)\|_{(3)}^2& \le C_T\Big( \||\psi_1|^{2p} \psi_1 -  |\psi_2|^{2p} \psi_2\|^2_{L^2(J_t;H^3\cap H^1_0(I;\C))}\\&\quad\quad\quad\quad\quad\quad+\|\lag u,Q \rag \varphi\|^2_{L^2(J_t;H^3\cap H^1_0(I;\C))}\Big)\\&
 \le  C_1\int_0^t  (1+|u(s)|^2 )\|  \varphi(s) \|_{(3)}^2\dd s , \quad t\in J_T,
\end{align*}where $C_1>0$ is a constant depending on $\|\psi_j\|_{C(J_T;H^3_{(0)})}$, $j = 1,2$. 
 Applying    the Gronwall inequality, we infer that $\varphi(t)=0$ for any $t\in J_T$.

 {\it Step~2.~Local-in-time existence.}~Let us take any $\psi_0\in H^3_{(0)}$ and $u\in L^2(J_T,\R^q)$ satisfying~\eqref{2.4},  any $T_1\in J_T$,      any  $\psi \in  B_{C(J_{T_1};H^3_{(0)} )} (\hat \psi,1)$,  and define    
 $$
	M(\psi)(t)=e^{-iA_V t} \psi_0-i\int_0^te^{-iA_V(t-s)} \left(\kappa  |\psi(s)|^{2p} \psi (s)+\lag u(s),Q(x) \rag \psi(s)\right) \dd s
$$ for $ t\in J_{T_1} $.~Then  \eqref{3.3} implies that   $M(\psi)\in C(J_{T_1};H^3_{(0)} )$.~Moreover, using~\eqref{3.4}, we get
\begin{align*}
\|M(\psi)(t) - \hat \psi(t)\|_{(3)}\le &C_2\Big(\|\psi_0-\hat \psi_0\|_{(3)}+ 	 \|u - \hat u\|_{L^2(J_T;\R^q)}\nonumber \\&+  	( \|\hat u\|_{L^2(J_{T_1};\R^q)} +T_1^{\frac12}) \|\psi - \hat \psi \|_{C(J_t;H^3_{(0)} ) } \Big), \quad t\in J_{T_1},  
\end{align*}
 where\,\footnote{All the constants $C_i$ below depend on $T$ and $\La$.} $C_2(T, \La)>0$   (see \eqref{2.3}).~This implies that $M$ maps  $ B_{C(J_{T_1};H^3_{(0)} )} (\hat \psi,1)$ into itself for sufficiently small    $T_1$ and $\de$. In a similar way, for~any~$\psi_1,\psi_2\in  B_{C(J_{T_1};H^3_{(0)} )} (\hat \psi,1)$ and $t\in J_{T_1}$, we have
 \begin{align*}
\|M(\psi_1)(t) - M(\psi_2)(t)\|_{(3)}\le &C_3\Big( 	( \|u\|_{L^2(J_{T_1};\R^q)} +T_1^{\frac12}) \|\psi_1 -  \psi_2 \|_{C(J_t;H^3_{(0)} ) } \Big).   
\end{align*}Thus, $M$ is a contraction in  $ B_{C(J_{T_1};H^3_{(0)} )} (\hat \psi,1)$ for sufficiently small~$T_1$ and $\de$. Hence, there is $\psi \in  B_{C(J_{T_1};H^3_{(0)} )} (\hat \psi,1)$ such that $M(\psi)=\psi$.

  {\it Step~3.~Existence up to $T$.}~Let $\psi$ be a maximal solution of \eqref{2.1},~\eqref{2.2}   
corresponding to $\psi_0\in H^3_{(0)}$ and~$u\in L^2(J_T,\R^q)$   satisfying~\eqref{2.4}. Then   there  is  a maximal time  $T_*\in J_T$ such that $\psi$ is defined on~$[0,T_*)$ and
$$
\|\psi(t)\|_{(3)}\to +\ty\quad \text{ as $t\to {T_*}^-$ when $T_*<T.$}
$$ The difference  $\varphi=\psi-\hat \psi$ satisfies the inequality 
\begin{align}
\|\varphi(t)\|_{(3)}& \le C_4\de+   
C_4\int_0^t \left(\|\varphi(s)\|_{(3)}+\|\varphi(s)\|_{(3)}^{2p+1}\right) \dd s\nonumber\\&\quad+C_4\left(\int_0^t |u(s)|^2 \|\varphi(s)\|_{(3)}^2  \dd s\right)^\frac12,\quad t\in [0,T_*).\label{3.8}
\end{align}  
  Let us denote 
$$
\tau=\inf\{t\in J_T: \|\varphi(t)\|_{(3)}=1\},
$$where the infimum over an empty set is equal to $T$.~Let us show that, for sufficiently small $\de$, we have $\tau=T$. Arguing by contradiction,  let us assume that~$\tau<T$. From \eqref{3.8} we derive  
$$
\|\varphi(t)\|_{(3)}^2 \le C_5\de^2+   
C_5\int_0^t \|\varphi(s)\|_{(3)}^2 (1+ |u(s)|^2 ) \dd s ,\quad t\in [0,\tau).
$$The Gronwall inequality implies  
$$
\|\varphi(t)\|_{(3)}^2 \le C_5\de^2 \exp\left(   
C_5\int_0^t   (1+ |u(s)|^2 ) \dd s \right)<1,\quad t\in [0,\tau)
$$for   small $\de$ and any $u\in L^2(J_T,\R^q)$   satisfying~\eqref{2.4}.~This contradicts the definition of $\tau$.
Thus   $\tau=T$ for small $\de$.
 
    {\it Step~4.~Differentiability.} The proof of $C^1$ regularity of the resolving operator is similar to the case   considered  in  Sections~2.2 and~3.2  in \cite{BL-2010}, so we do not provide the details.

\subsection{Proof of Lemma \ref{L:1.4}}\label{S:3.3}

Let us fix any $k\ge1$. Let $\phi_{k,\kappa}$ be the eigenfunction of the operator $A_\kappa$ (see~\eqref{1.7}) associated with the eigenvalue~$\la_{k,\kappa}$. By Theorem~3 in Chapter~2 in~\cite{PT-87}, both~$\phi_{k,\kappa}$ and $\la_{k,\kappa}$ are real-analytic functions in~$\kappa$.~By differentiating in $\kappa$ the    identity 
$$
\left(-\p_{xx}^2  -\pi^2+2 \kappa\phi_1^2 -\la_{k,\kappa}    \right)\phi_{k,\kappa}=0,
$$
  we obtain
$$
\left(-\p_{xx}^2  -\pi^2+2 \kappa\phi_1^2 -\la_{k,\kappa}    \right)\frac{\dd\phi_{k,\kappa}}{\dd \kappa}+\left(2 \phi_1^2 -\frac{\dd \la_{k,\kappa}}{\dd \kappa}    \right)\phi_{k,\kappa} =0.
$$Taking the scalar product in $L^2(I;\R)$ of this identity with  $\phi_{k,\kappa}$, we get
$$
\frac{\dd \la_{k,\kappa}}{\dd \kappa}  =\lag 2\phi_1^2,\phi_{k,\kappa}^2\rag_{L^2}> 0 
$$for any   $\kappa\in \R$. We conclude
  that $\la_{k,\kappa}$ is strictly increasing in $\kappa$, so it can vanish for at most one value of $\kappa\in \R$. Moreover,  as~$\la_{1,0}=0$, strict monotonicity of~$\la_{k,\kappa}$ implies that $0<\la_{1,\kappa}\le\la_{k,\kappa} $ for any $k\ge1$ and $\kappa>0$. This~completes the proof of the lemma.

\subsection{Saturation property}\label{S:3.4}
This section is devoted to  the proof of Proposition~\ref{P:1.5}. 
 It suffices to consider the case $q=2$ and $Q(x)=(\mek,\cos(\pi x))$. It is easy to see that 
 \begin{equation}\label{3.9}
 	\HH=\lspan_\R\{\phi_1, \phi_2\}.
 \end{equation}
  Let us denote 
	$$
	F(g)=i(-\p_{xx}^2 g -\pi^2g+2 \kappa  \phi_1^2 \Re(g)) \quad\textup{ for   } g\in H^2(I;\C).
	$$
From \eqref{1.3} and \eqref{1.4} it follows that    $F(g) \in \HH_j$ for   $g\in \HH_{j-1}$ and $j\ge1$. 
In~particular, $F(g)\in \HH_\ty$ for   $g\in \HH_\ty$. Furthermore, from \eqref{3.9}  we derive   
 $$
\HH_\ty  \subset \lspan_\C\{\phi_k:k\ge1\} \subset H^3_{(0)}.
$$ The proposition will be established  if we show that   $ \HH_\ty$ is dense in~$ H^3_{(0)}$. The~proof of this is divided into three steps.

{\it Step~1.}~First let us show that the eigenfunctions~$\phi_{2k+1}$, $k\ge0$ belong to~$\HH_\ty$.~We proceed by recurrence.~By \eqref{3.9}, we~have   $\phi_1\in\HH\subset  \HH_\ty$.~Let us take any $N\ge1$,  assume that   $\phi_{2k+1}\in\HH_\ty$ for any~$0\le k\le N-1$, and prove that $\phi_{2N+1}\in\HH_\ty$.   We have
$$
F(\phi_{2N-1})=i c_{2N-1} \phi_{2N-1}    +i2 \kappa  \phi_1^2 \phi_{2N-1} \in\HH_\ty,
$$ where $c_{2N-1}=\pi^2\left((2N-1)^2-1\right)$. Combining this  with the identity
$$
\phi_1^2 \phi_{2N-1}=\frac12\left( 2 \phi_{2N-1}-\phi_{2N-3} -\phi_{2N+1} \right),
$$
we obtain
$$
  F(F(\phi_{2N-1}))= \left(c_{2N-1}^2-c_{2N-1} \right) \phi_{2N-1}     +\frac12\left( c_{2N-3}\phi_{2N-3} +c_{2N+1}\phi_{2N+1} \right)\in\HH_\ty.
$$
  As $\phi_{2N-3}, \phi_{2N-1}\in \HH_\ty$,  we conclude that   $\phi_{2N+1}\in\HH_\ty$.  

In a similar way, as $\phi_2\in\HH\subset  \HH_\ty$,
one   shows that $\phi_{2k}\in \HH_\ty$ for any $k\ge1$.

 {\it Step~2.} Let us prove that the vector space  
 $$
 \GG=\lspan_{\R}\{i F(\phi_k):k\ge1\}
 $$ is dense in  $H^3_{(0)}(I;\R)$.~Indeed, let~$A_\kappa$ be the operator   defined by \eqref{1.7} with $\kappa \in \R\setminus\K$. Then $\la_{k,\kappa}\neq 0$ for any $k\ge1$, so  the image of $A_\kappa$ is dense in~$H^3_{(0)}(I;\R)$. It~remains to note that $A_\kappa g =-iF(g)$ for    $g\in H^2$, so $\GG$ is dense in~$H^3_{(0)}(I;\R)$.

 {\it Step~3.} Combining the results of steps 1 and 2, and using the fact that
 $$
 \lspan_{\R}\{  \phi_k, F(\phi_k):k\ge1\} \subset \HH_\ty,
 $$we see that  $ \HH_\ty$ is dense in~$ H^3_{(0)}$.~This  completes the proof of the proposition.

 \begin{corollary}\label{C:3.4}
Let  $ \K\subset (-\ty,0]$ be  the set in Lemma~\ref{L:1.4},   $Q(x)=(\mek,\cos(\pi x))$,  $V(x) = 0$, and   $W(x) =  2 \kappa  \phi_1^2(x)$.~Then, for any $\kappa\in \K$,  the codimension of  $ \HH_\ty$ in  $H^3_{(0)}$ is one.
 \end{corollary}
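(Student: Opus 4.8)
The plan is to run the three-step scheme of the proof of Proposition~\ref{P:1.5} and to watch it lose exactly one dimension when $\kappa\in\K$. Since $A_\kappa$ is a one-dimensional Schr\"odinger operator its spectrum is simple, $\la_{1,\kappa}<\la_{2,\kappa}<\cdots$, so the assumption $\kappa\in\K$ (some eigenvalue vanishing) singles out a \emph{unique} index $k_0$ with $\la_{k_0,\kappa}=0$. I will show that $\overline{\HH_\ty}$ (the closure in $H^3_{(0)}$) equals the closed codimension-one subspace $\{\psi\in H^3_{(0)}:\lag\Im\psi,\phi_{k_0,\kappa}\rag_{L^2}=0\}$, the single missing direction being $i\phi_{k_0,\kappa}$. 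Writing $g=\Re g+i\Im g$, the definition of $F$ in Section~\ref{S:3.4} gives the decomposition $F(g)=-B\,\Im g+iA_\kappa\Re g$, with $B=-\p_{xx}^2-\pi^2$, which I will use repeatedly.

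For the bound $\codim\overline{\HH_\ty}\ge1$ I would introduce the real subspace
$$
\NN=\bigl\{\psi\in\lspan_\C\{\phi_k:k\ge1\}:\ \lag\Im\psi,\phi_{k_0,\kappa}\rag_{L^2}=0\bigr\}
$$
and check that it contains $\HH_0=\lspan_\R\{\phi_1,\phi_2\}$ and is $\FF$-invariant. Indeed, for $g\in\NN$ one has $\Im F(g)=A_\kappa\Re g$, whence by self-adjointness of $A_\kappa$ and $A_\kappa\phi_{k_0,\kappa}=\la_{k_0,\kappa}\phi_{k_0,\kappa}=0$,
$$
\lag\Im F(g),\phi_{k_0,\kappa}\rag_{L^2}=\lag\Re g,A_\kappa\phi_{k_0,\kappa}\rag_{L^2}=0,
$$
so $F(g)\in\NN$ and $\FF(\NN)=\NN$. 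Hence $\HH_\ty\subset\NN$, and since $\psi\mapsto\lag\Im\psi,\phi_{k_0,\kappa}\rag_{L^2}$ is continuous on $H^3_{(0)}$, the closure $\overline{\HH_\ty}$ is contained in the closed hyperplane $\{\psi\in H^3_{(0)}:\lag\Im\psi,\phi_{k_0,\kappa}\rag_{L^2}=0\}$.

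For the matching bound $\codim\overline{\HH_\ty}\le1$ I would invoke Step~1 of the proof of Proposition~\ref{P:1.5}, which carries over verbatim (it requires only $\kappa\neq0$) and yields $\phi_k\in\HH_\ty$ for all $k\ge1$. (For the remaining value $\kappa=0\in\K$ this step breaks down, since then $F=iB$ cannot couple distinct modes; that is the degenerate case, handled directly through Lemma~\ref{L:1.7}, and is excluded here.) Together with $F(\phi_k)=iA_\kappa\phi_k$ this gives $\lspan_\R\{\phi_k\}\oplus i\GG\subset\HH_\ty$, where $\GG=\lspan_\R\{A_\kappa\phi_k:k\ge1\}=A_\kappa\lspan_\R\{\phi_k\}$. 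Because real and purely imaginary functions are orthogonal for $\lag\cdot,\cdot\rag_{(3)}$, passing to closures yields $\overline{\HH_\ty}\supset H^3_{(0)}(I;\R)\oplus i\,\overline{\GG}$.

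Everything thus reduces to identifying $\overline{\GG}$, and this is the main obstacle. The claim is
$$
\overline{\GG}=\MM:=\bigl\{h\in H^3_{(0)}(I;\R):\ \lag h,\phi_{k_0,\kappa}\rag_{L^2}=0\bigr\},
$$
the codimension-one counterpart of the density asserted in Step~2 of Proposition~\ref{P:1.5}. The inclusion $\GG\subset\MM$ is again immediate from self-adjointness. For the density, given $h=\sum_{k\neq k_0}h_k\phi_{k,\kappa}\in\MM$ I would set $\psi=\sum_{k\neq k_0}\la_{k,\kappa}^{-1}h_k\phi_{k,\kappa}$; since $\inf_{k\neq k_0}|\la_{k,\kappa}|>0$ and $\la_{k,\kappa}\to+\ty$, the two extra powers of $\la_{k,\kappa}$ place $\psi$ in $H^5_{(0)}$ with $A_\kappa\psi=h$. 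The delicate point is that $A_\kappa$ is unbounded on $H^3_{(0)}$, so the approximation of $\psi$ by elements of $\lspan_\R\{\phi_k\}$ must be performed in $H^5_{(0)}$: using that $W=2\kappa\phi_1^2$ satisfies the boundary conditions \eqref{1.8}, one verifies $H^5_{(W)}=H^5_{(0)}$, so $\lspan_\R\{\phi_k\}$ is dense in $H^5_{(0)}$ and $A_\kappa:H^5_{(0)}\to H^3_{(0)}$ is bounded; choosing $g_n\to\psi$ in $H^5_{(0)}$ then gives $A_\kappa g_n\to h$ in $H^3_{(0)}$, i.e.\ $h\in\overline{\GG}$. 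With $\overline{\GG}=\MM$ the lower bound reads $\overline{\HH_\ty}\supset H^3_{(0)}(I;\R)\oplus i\MM$, which coincides with the upper bound, so $\overline{\HH_\ty}=H^3_{(0)}(I;\R)\oplus i\MM$ has codimension one.
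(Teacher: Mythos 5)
Your proposal is correct for every $\kappa\in\K\setminus\{0\}$ and is essentially the paper's own argument made explicit: the paper's proof simply points back to the proof of Proposition~\ref{P:1.5}, and your reconstruction (Step~1 giving all $\phi_k\in\HH_\ty$ once $\kappa\neq0$, the $\FF$-invariant hyperplane $\NN$ giving $\codim\ge1$, and the density of the image of $A_\kappa$ in the $L^2$-orthocomplement of $\phi_{k_0,\kappa}$ giving $\codim\le1$) is exactly what that back-reference amounts to, with the $H^5$-regularity details filled in. Your exclusion of $\kappa=0$ is not a defect but a correction to the statement: $0\in\K$ (the paper's own footnote says so), yet at $\kappa=0$ the coupling term used in Step~1 of Proposition~\ref{P:1.5} disappears and one computes $\HH_\ty=\lspan_\R\{\phi_1,\phi_2,i\phi_2\}$, which has infinite codimension, so the corollary (and its two-line proof, which tacitly assumes the mode-coupling) is accurate only for $\kappa\in\K\setminus\{0\}$ --- precisely the case you treat.
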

\begin{proof}
	As $\kappa\in \K$,   there is a unique   $k\ge1$ such that $\la_{k,\kappa}=0$.~From the   proof of Proposition~\ref{P:1.5} it follows that the vector space spanned by $i \phi_{k,\kappa}$ is the orthogonal complement of  $ \HH_\ty$ in  $H^3_{(0)}$. 
\end{proof}

\subsection{A closed image theorem}\label{S:3.5}

In this section, we formulate a simple functional analysis result    used in the proof of exact controllability of the linear Schr\"odinger~equation (see Section~\ref{S:1.2}). 
Being unable to find a  proper reference, we give a complete proof.

Let $X$ and $Y$   be Banach spaces, and let  $X^*$ and $Y^*$   be their duals. 

\begin{lemma}\label{L:3.3}
	  Assume that  $A: X\to Y$ and $B: X\to Y$ are linear continuous
operators such that $A$ is surjective and~$B$  is compact. Then the image of $A+B$ is closed in $Y$.
\end{lemma}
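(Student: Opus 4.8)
The plan is to pass to adjoints and reduce the statement to the stability of \emph{bounded below} operators under compact perturbations. Recall that, by the closed range theorem, the image of a bounded operator between Banach spaces is closed if and only if the image of its adjoint is closed; since $(A+B)^\ast=A^\ast+B^\ast$, it therefore suffices to show that $A^\ast+B^\ast:Y^\ast\to X^\ast$ has closed image. Here I would invoke two standard facts: first, since $A$ is surjective, its adjoint is bounded below, i.e. there is $\gamma>0$ with $\|A^\ast\eta\|_{X^\ast}\ge\gamma\|\eta\|_{Y^\ast}$ for all $\eta\in Y^\ast$ (this is the equivalence ``$A$ surjective $\iff A^\ast$ bounded below'', a consequence of the open mapping theorem and Hahn--Banach); second, by Schauder's theorem $B^\ast$ is compact because $B$ is. Thus the lemma reduces to the core statement: if $T:=A^\ast$ is bounded below and $K:=B^\ast$ is compact, then $L:=T+K$ has closed image. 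A purely primal argument is also conceivable, but it is messier, since surjectivity controls the cokernel of $A+B$ rather than its kernel, whereas on the dual side it controls a kernel and the Riesz machinery applies directly.

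To prove the core statement I would proceed in three steps. First I would show $\ker L$ is finite-dimensional: for $\eta\in\ker L$ one has $T\eta=-K\eta$, so $T$ maps the closed unit ball of $\ker L$ into the relatively compact set $-K(B_{Y^\ast}(0,1))$; since $T$ is bounded below, $T^{-1}$ is continuous on the closed subspace $\operatorname{Im}T$, hence the unit ball of $\ker L$ is itself relatively compact, forcing $\dim\ker L<\infty$ by Riesz's theorem. Second, being finite-dimensional, $\ker L$ admits a closed complement $Z$ with $Y^\ast=\ker L\oplus Z$, obtained through the usual Hahn--Banach construction of a continuous projection onto $\ker L$. Since $L$ vanishes on $\ker L$, we have $\operatorname{Im}L=L(Z)$, so it remains to prove that $L|_Z$ is bounded below.

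The main obstacle is this last step, which I would settle by contradiction. Suppose there were $\eta_n\in Z$ with $\|\eta_n\|=1$ and $L\eta_n\to0$. By compactness of $K$, after passing to a subsequence $K\eta_n\to\chi$, whence $T\eta_n=L\eta_n-K\eta_n\to-\chi$. Since $T$ is bounded below it is an isomorphism onto its closed image, so $T^{-1}$ is continuous there and $\eta_n=T^{-1}(T\eta_n)\to\eta_\ast:=T^{-1}(-\chi)$. Then $\|\eta_\ast\|=1$, while $\eta_\ast\in Z$ (as $Z$ is closed) and $L\eta_\ast=\lim L\eta_n=0$, so $\eta_\ast\in\ker L\cap Z=\{0\}$, contradicting $\|\eta_\ast\|=1$. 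Hence $L|_Z$ is bounded below, $\operatorname{Im}L=L(Z)$ is closed, and so is $\operatorname{Im}(A+B)$ by the closed range theorem. The delicate points to monitor are the correct invocation of ``$A$ surjective $\iff A^\ast$ bounded below'' and of Schauder's theorem, together with the observation that boundedness below of $T$ is precisely what makes both the finite-dimensionality of $\ker L$ and the extraction of the convergent limit $\eta_\ast$ work.
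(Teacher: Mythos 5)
Your proof is correct and takes essentially the same route as the paper's: both pass to the adjoint $A^*+B^*$ via the closed-range theorem, invoke Schauder's theorem for the compactness of $B^*$, use that $A^*$ is bounded below (equivalently, injective with closed image, via the open mapping theorem), show $\ker(A^*+B^*)$ is finite-dimensional, split it off by a closed complement, and run the same normalization--compactness contradiction argument. The only differences are cosmetic: you package the last step as ``$L$ is bounded below on the complement $Z$, hence $L(Z)$ is closed,'' while the paper proves the same thing as two sequential statements (preimages of convergent sequences are bounded, and bounded preimages with convergent images subconverge).
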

\begin{proof}
 Let $A^*:Y^*\to X^*$ and $B^*:Y^*\to X^*$ be the adjoint operators of $A$ and~$B$. By Theorem~2.19 in~\cite{HB-11}, the set  
  $(A+B)(X)$ is closed in $Y$ if and only if~$(A^*+B^*)(Y^*)$ is closed in $X^*$. We will prove that   $(A^*+B^*)(Y^*)$ is closed in~$X^*$ in  three steps.

 {\it Step~1.} Let us first show that the   kernel of the operator $A^*+B^*$ is finite-dimensional. To~this end, we prove  that any   bounded sequence $\{y_n\}$ in the kernel of $A^*+B^*$ has a convergent subsequence. Indeed, by Theorem~6.4 in~\cite{HB-11}, $B^*$~is compact. So there is a subsequence $\{y_{n_k}\}$ such that $\{B^*(y_{n_k})\}$ converges. The~equality~$A^*(y_{n_k})+B^*(y_{n_k})=0$ implies that $\{A^*(y_{n_k})\}$ also converges. As~$A^*$ is injective and $A^*(Y^*)$ is closed, the open mapping theorem implies  that~$\{y_{n_k}\}$ converges. 
 
 Thus the   kernel of $A^*+B^*$ is finite-dimensional, so it is complemented.  Without loss of generality, we can assume that $A^*+B^*$ is injective.

	{\it Step~2.} Let  $\{y_{n}\} \subset Y^*$ be a sequence such that $\{(A^*+B^*)(y_n)\}$ converges.  Let us show that~$\{y_{n}\}$ is bounded.~Arguing by contradiction, let us
	assume that there is a subsequence   such that $\|y_{n_k}\|_{Y^*}\to +\ty$. Then, for the sequence $\tilde y_n= y_{n_k}/\|y_{n_k}\|_{Y^*}$ we have $(A^*+B^*)(\tilde y_n)\to 0$ as $n\to +\ty$.~By the fact that~$\|\tilde y_n\|_{Y^*}=1$ and the compactness of~$B^*$, there is a subsequence $\{\tilde y_{\tilde n_k}\}$ such that~$\{B^*(\tilde y_{\tilde n_k})\}$ converges. From the equality
	$$
	A^*(\tilde y_{\tilde n_k})=(A^*+B^*)(\tilde y_{\tilde n_k})-B^*(\tilde y_{\tilde n_k})
	$$it follows that  $\{A^*(\tilde y_{\tilde n_k})\}$ converges too. As in step~1, this implies that $\{ \tilde y_{\tilde n_k}\}$ converges to some limit $\tilde y$.
	 Since~$\|\tilde y_{\tilde n_k}\|_{Y^*}=1$, we have   $\|\tilde y\|_{Y^*}=1$.	 By~continuity, 
	\begin{equation}\label{3.10}
		(A^*+B^*)(\tilde y_{\tilde n_k})\to (A^*+B^*)(\tilde y) \quad\textup{as } k\to +\ty. 	
	\end{equation}
	On the other hand, from the construction of $\{\tilde y_n\}$ it follows that 
	\begin{equation}\label{3.11}
(A^*+B^*)(\tilde y_{\tilde n})\to 0 \quad\textup{as } n\to +\ty.	
	\end{equation}
From \eqref{3.10} and \eqref{3.11} it follows that
  $\tilde y$ is a non-zero element of the kernel of~$A^*+B^*$. This  contradicts the injectivity of $A^*+B^*$.
	
		{\it Step~3.} Let  $\{y_{n}\} \subset Y^*$ be a bounded sequence such that  
		  \begin{equation}\label{3.12}
		     (A^*+B^*)(y_n)\to x\quad\textup{as } n\to +\ty.
		  \end{equation}By   compactness of $B^*$, there is a subsequence $\{y_{n_k}\}$ such that $\{B^*(y_{n_k})\}$ converges. From \eqref{3.12} it  follows that~$\{A^*(y_{n_k})\}$ converges too.~As above, this implies that $\{y_{n_k}\}$ converges to some limit $y$.~By continuity and \eqref{3.12}, we have~$ (A^*+B^*)(y)=x$.~We conclude   that the set   $(A^*+B^*)(Y^*)$ is closed in~$X^*$.
		    \end{proof}

  \addcontentsline{toc}{section}{Bibliography}
\def\cprime{$'$} \def\cprime{$'$}
  \def\polhk#1{\setbox0=\hbox{#1}{\ooalign{\hidewidth
  \lower1.5ex\hbox{`}\hidewidth\crcr\unhbox0}}}
  \def\polhk#1{\setbox0=\hbox{#1}{\ooalign{\hidewidth
  \lower1.5ex\hbox{`}\hidewidth\crcr\unhbox0}}}
  \def\polhk#1{\setbox0=\hbox{#1}{\ooalign{\hidewidth
  \lower1.5ex\hbox{`}\hidewidth\crcr\unhbox0}}} \def\cprime{$'$}
  \def\polhk#1{\setbox0=\hbox{#1}{\ooalign{\hidewidth
  \lower1.5ex\hbox{`}\hidewidth\crcr\unhbox0}}} \def\cprime{$'$}
  \def\cprime{$'$} \def\cprime{$'$} \def\cprime{$'$}


\begin{thebibliography}{CMSB09}

\bibitem[AS05]{AS-2005}
A.~A. Agrachev and A.~V. Sarychev.
\newblock Navier--{S}tokes equations: controllability by means of low modes
  forcing.
\newblock {\em J. Math. Fluid Mech.}, 7(1):108--152, 2005.

\bibitem[AS06]{AS-2006}
A.~A. Agrachev and A.~V. Sarychev.
\newblock Controllability of 2{D} {E}uler and {N}avier--{S}tokes equations by
  degenerate forcing.
\newblock {\em Comm. Math. Phys.}, 265(3):673--697, 2006.

\bibitem[BMS82]{BMS-82}
J.~M. Ball, J.~E. Marsden, and M.~Slemrod.
\newblock Controllability for distributed bilinear systems.
\newblock {\em SIAM J. Control Optim.}, 20(4):575--597, 1982.


\bibitem[Bea05]{B-2005}
K.~Beauchard.
\newblock Local controllability of a 1-{D} {S}chr\"{o}dinger equation.
\newblock {\em J. Math. Pures Appl. (9)}, 84(7):851--956, 2005.


\bibitem[BC06]{BC-2006}
K.~Beauchard and J.-M. Coron.
\newblock Controllability of a quantum particle in a moving potential well.
\newblock {\em J. Funct. Anal.}, 232(2):328--389, 2006.



\bibitem[BL10]{BL-2010}
K.~Beauchard and C.~Laurent.
\newblock Local controllability of 1{D} linear and nonlinear {S}chr\"{o}dinger
  equations with bilinear control.
\newblock {\em J. Math. Pures Appl. (9)}, 94(5):520--554, 2010.

\bibitem[BCCS12]{BCCS-12}
U.~Boscain, M.~Caponigro, T.~Chambrion, and M.~Sigalotti.
\newblock A weak spectral condition for the controllability of the bilinear
  {S}chr\"{o}dinger equation with application to the control of a rotating
  planar molecule.
\newblock {\em Comm. Math. Phys.}, 311(2):423--455, 2012.


\bibitem[Bre11]{HB-11}
H.~Brezis.
\newblock {\em Functional analysis, {S}obolev spaces and partial differential
  equations}.
\newblock Universitext. Springer, New York, 2011.


\bibitem[CMSB09]{CMSB-2009}
T.~Chambrion, P.~Mason, M.~Sigalotti, and U.~Boscain.
\newblock Controllability of the discrete-spectrum {S}chr\"{o}dinger equation
  driven by an external field.
\newblock {\em Ann. Inst. H. Poincar\'{e} Anal. Non Lin\'{e}aire},
  26(1):329--349, 2009.
  
 
 \bibitem[Cor07]{coron2007}
J.-M. Coron.
\newblock {\em {Control and Nonlinearity}}, volume 136.
\newblock AMS, Providence, RI, 2007.

 
  \bibitem[CC04]{CC-04}
J.-M. Coron and E.~Cr\'{e}peau.
\newblock Exact boundary controllability of a nonlinear {K}d{V} equation with
  critical lengths.
\newblock {\em J. Eur. Math. Soc.}, 6(3):367--398, 2004.



\bibitem[CXZ21]{CXZ-21}
J.-M. Coron, S.~Xiang, and P.~Zhang.
\newblock On the global approximate controllability in small time of
  semiclassical 1-d schr\"odinger equations between two states with positive
  quantum densities.
\newblock {\em Preprint, arXiv:2110.15299}, 2021.

\bibitem[Duc20]{AD-20}
A.~Duca.
\newblock Simultaneous global exact controllability in projection of infinite
  1{D} bilinear {S}chr\"{o}dinger equations.
\newblock {\em Dyn. Partial Differ. Equ.}, 17(3):275--306, 2020.


\bibitem[DN21]{DN-2021}
A.~Duca and V.~Nersesyan.
\newblock {Bilinear control and growth of Sobolev norms for the nonlinear
  Schr\"odinger equation}.
\newblock {\em Preprint, arXiv:2101.12103}, 2021.


\bibitem[Mir09]{MM-09}
M.~Mirrahimi.
\newblock Lyapunov control of a quantum particle in a decaying potential.
\newblock {\em Ann. Inst. H. Poincar\'{e} Anal. Non Lin\'{e}aire},
  26(5):1743--1765, 2009.


\bibitem[Mor14]{Morgan-14}
M.~Morancey.
\newblock Simultaneous local exact controllability of 1{D} bilinear
  {S}chr\"{o}dinger equations.
\newblock {\em Ann. Inst. H. Poincar\'{e} Anal. Non Lin\'{e}aire},
  31(3):501--529, 2014.


\bibitem[MN15]{MN-15}
M.~Morancey and V.~Nersesyan.
\newblock Simultaneous global exact controllability of an arbitrary number of
  1{D} bilinear {S}chr\"{o}dinger equations.
\newblock {\em J. Math. Pures Appl. (9)}, 103(1):228--254, 2015.


\bibitem[Ner10]{VN-2010}
V.~Nersesyan.
\newblock Global approximate controllability for {S}chr\"{o}dinger equation in
  higher {S}obolev norms and applications.
\newblock {\em Ann. Inst. H. Poincar\'{e} Anal. Non Lin\'{e}aire}, 27(3), 2010.

\bibitem[PT87]{PT-87}
J.~P\"{o}schel and E.~Trubowitz.
\newblock {\em Inverse spectral theory}, volume 130 of {\em Pure and Applied
  Mathematics}.
\newblock Academic Press, Inc., Boston, MA, 1987.

\bibitem[Shi06]{shirikyan-cmp2006}
A.~Shirikyan.
\newblock Approximate controllability of three-dimensional {N}avier--{S}tokes
  equations.
\newblock {\em Comm. Math. Phys.}, 266(1):123--151, 2006.

\end{thebibliography}
\end{document}